\def\a{\alpha}
\def\b{\beta}
\def\om{\omega}
\def\wt{\widetilde}
\def\i{\indent}
\def\Res{\mathop{\rm Res}}
\def\wt{\widetilde}
\def\ft{\widetilde f}
\newtheorem{theorem}{Theorem}[section]
\newtheorem{defi}[theorem]{Definition}
\newtheorem{lemma}[theorem]{Lemma}
\newtheorem{prop}[theorem]{Proposition}
\numberwithin{equation}{section}
\def\Ai{\text{Ai}}
\def\Bi{\text{Bi}}
\def\A{\text{A}}
\def\pt{\widetilde{\phi}}
\def\po{\phi^*}
\def\({\left(}
\def\){\right)}
\def\hh{\notag \\ &=}
\begin{document}
\author{Y. Lin$^{\,a,b}$ and R. Wong$^{\,b}$
\\
\hspace{.7cm}
 \hbox{\small \emph{ $^a$ Department of Mathematics, Zhongshan University, Guangzhou,
China}}
\\
 \hbox{\small
\emph{$^b$ Department of Mathematics, City University of Hong Kong, Tat Chee Avenue, Kowloon,
Hong Kong}
}
}
\title{Global Asymptotics of the Discrete Chebyshev Polynomials}
\date{}
\maketitle

\begin{abstract}
In this paper, we study the asymptotics of the discrete Chebyshev polynomials
$t_{n}(z,N)$ as the degree grows to infinity.
Global asymptotic formulas are obtained as $n\rightarrow\infty$, when the
ratio of the parameters $n/N=c$ is a constant in the interval $(0,1)$.
Our method is based on a modified version of
the Riemann-Hilbert approach first introduced by Deift and Zhou.
\end{abstract}

\section{Introduction}
The discrete Chebyshev polynomials were introduced by Chebyshev
in the study of least-squares data fitting.
They are  explicitly given by
\begin{equation}
    t_{n}(z,N)
=
    n!\Delta^{n} \left\{  {z \choose n} {z-N \choose n}
    \right\},\qquad n=0,1,\cdots,N-1
,
\label{t}
\end{equation}
where $\Delta$ is the forward difference operator with unit spacing
on the variable $z$; see  \cite[(2.8.1)]{Szego}.
These polynomials
are orthogonal on the discrete set $\{ 0,1,\cdots,N-1 \}$
with respect to the weight function $w(k)=1$, $k=0,1\cdots N-1$. More precisely, we have
\begin{equation}
    \sum_{k=0}^{N-1}t_{n}(k,N)t_{m}(k,N)
=
    \frac{ (N+n)! }{ (2n+1)(N-n-1)!}
    \delta_{n,m}
,
\qquad
    n,m=0,1,\cdots,N-1
.
%\eqno{(10)}
\end{equation}
Like all other classical
orthogonal polynomials,
these polynomials also satisfy a
three-term recurrence relation. Indeed, we have
\begin{equation}
    (n+1)t_{n+1}(z,N)
=
    2(2n+1)(z-\tfrac12 (N-1) )
   t_{n}(z,N)
-
    n(N^2-n^2)
    t_{n-1}(z,N)
;
\label{recurrence relations}
\end{equation}
see Gautschi \cite{Gautschi}.
\\

The discrete Chebyshev polynomials are also known as
a special case of the Hahn polynomials \cite[p.174]{BealsWong}
\begin{equation}
  Q_n(z;\a,\b,N):={}_3F_2(-n,-z,n+\a+\b+1;-N,\a+1;1);
\end{equation}
see also \cite{KarlinMcgregor}.
Recall the weight function of the Hahn polynomials
\begin{equation}
   w(z;\a,\b,N):=
  \frac{ (\a+1)_z }{ z! } \frac{ (\b+1)_{N-1-z} }{ (N-1-z)! },
\qquad
z=0,1,\cdots,N-1.
\end{equation}
Note that by taking
$\a=\b=0$, the weight function becomes 1.
In this case, the Hahn polynomials reduce to the discrete Chebyshev polynomials except for a constant factor.

%%%%%%%%%%%%%%%%%%%%%%%%%%%%%%%%%%%%%%%%%%%%%%%%%%%%%%%%%%%
%%%%%%%%%%%%%%%%%%%%%%%%%%% Page 2 %%%%%%%%%%%%%%%%%%%%%%%%
%%%%%%%%%%%%%%%%%%%%%%%%%%%%%%%%%%%%%%%%%%%%%%%%%%%%%%%%%%%

For further properties and applications, we refer to Szeg\"o \cite{Szego} and Hildebrand \cite{Hildebrand}.
%In applications, there has been interest in knowing the asymptotic behavior of
%the discrete Chebyshev polynomials
%when $n$ goes to infinity.
Like other discrete orthogonal polynomials,
these polynomials
do not satisfy a second-order linear differential equation.
Hence,
the asymptotic theory for differential equations
can not be applied.
By using the steepest descent method
for double integrals,
Pan and Wong \cite{PanWong} have recently derived infinite asymptotic expansions for
the discrete Chebyshev polynomials when the variable $z$ is real  and the ratio $n/N$ lies in the interval $(0,1)$;
their results involve a confluent hypergeometric function and its derivative.
For more information about the integral methods,
we refer to Wong \cite{WongBook}.
On the other hand,
Baik et al. \cite{BaikBook} have studied the asymptotics of
discrete orthogonal polynomials with respect to a general weight function by using
the Riemann-Hilbert approach. The results in \cite{BaikBook} are very general, but
it is difficult to use them to write out explicit formulas for specific polynomials.
In the case of the Hahn polynomials (1.4), only the situation when both parameters
$\a$ and $\b$ are large has been considered. More precisely, the authors of \cite{BaikBook}
considered the case when $\a=NA$ and $\b=NB$, where $A$ and $B$ are fixed positive
numbers, thus excluding the special case of the discrete Chebyshev polynomials (i.e., $\a=\b=0$).
Moreover, the results in \cite{BaikBook} are more local in nature; that is, more formulas are
needed to describe the behavior of the orthogonal polynomials in the complex plane.

In this paper,
we investigate the asymptotics
of the discrete Chebyshev polynomials as $n\rightarrow \infty$,
when the
ratio of the parameters $n/N$ is a constant $c\in(0,1)$.
Global asymptotic formulas are obtained in the
complex $z$-plane when $n$ goes to infinity.
Our approach is based on a modified version of the steepest-descent method
for Riemann-Hilbert problems
introduced by Deift and Zhou
in \cite{Deift}; see \cite{DaiWong}, \cite{OuWong} and
\cite{X.S.Wang}.
More precisely,
we derive two Airy-type asymptotic formulas
for the discrete Chebyshev polynomials.
The regions of validity of these formulas overlap,
and together they cover the whole complex plane.

The presentation of this paper is arranged as follows:
In Section 2, we first present the basic Riemann-Hilbert
problem associated with the discrete Chebyshev
polynomials. Then, we transform the basic Riemann-Hilbert
problem into
a continuous one, which is similar to that in \cite{DaiWong,X.S.Wang}.
In Section 3, we introduce some auxiliary functions,
known as the $g$-function and the $\phi$-function.
Moreover, we define a function $D(z)$,
which is analogous to the function first used in \cite{X.S.Wang}
to remove the jumps of the continuous Riemann-Hilbert
problem near the endpoints of the interval of orthogonality.
In Section 4, we construct our parametrix by using
Airy functions and their derivatives, and
prove that this parametrix is asymptotically
equal to
the solution of the
Riemann-Hilbert
problem associated with  the discrete Chebyshev
polynomials formulated in Section 2.
Our main result is given in Section 5.
In Section 6, we compare our formulas with those given by
Pan and Wong \cite{PanWong}.

\section{Riemann-Hilbert Problem}

We start with the fundamental Riemann-Hilbert problem (RHP) for the discrete orthogonal polynomials.
Note that the leading coefficient of $t_{n}(z,N)$ is $(2n)!/n!^2$. Hence,
the monic discrete Chebyshev polynomials are given by
\begin{equation}
  \pi_{N,n}(z):=\frac{n!^2}{(2n)!} t_{n}(z,N)
.
\label{pi_definition}
\end{equation}
Clearly, they satisfy the new orthogonality relation
\begin{equation}
     \sum_{k=0}^{N-1}\pi_{N,n}(k)\pi_{N,m}(k)
=
    \delta_{n,m}/\gamma_{N,n}^2
,
\end{equation}
where
$$
    \gamma_{N,n}^2
    =
    \frac{ (2n+1)\Gamma(N-n)\Gamma^2(2n+1) }{ \Gamma(N+n+1)\Gamma^4(n+1) }
.
$$
Consider the following RHP for a $2\times2$ matrix-value function $Y(z)$:
%%%%%%%%%%%%%%%%%%%%%%%%%%%%%%%%%%%%%%%%%%%%%%%%%%%%%%%%%%%
%%%%%%%%%%%%%%%%%%%%%%%%%%% Page 3 %%%%%%%%%%%%%%%%%%%%%%%%
%%%%%%%%%%%%%%%%%%%%%%%%%%%%%%%%%%%%%%%%%%%%%%%%%%%%%%%%%%%

\begin{enumerate}
\item[($\,Y_a$)] $Y(z)$ is analytic for $z\in \mathbb{C}\setminus \mathbb{N}$;
\item[($\,Y_b$)] at each $z=k\in \mathbb{N}$, the first column of $Y$ is analytic and the second column of $Y$ has a simple pole where the residue is
$$
    \Res\limits_{z=k} Y(z)=\lim_{z\rightarrow k} Y(z)
    \begin{pmatrix} 0 & 1  \\ 0 & 0 \end{pmatrix};
$$
    \item[($\,Y_c$)] as $z\rightarrow \infty$,
$$
    Y(z)
=
    \left(  I + O\left(\frac1z \right)  \right)
    \begin{pmatrix}
    z^n  & 0\\
    0 & z^{-n}
    \end{pmatrix}.
$$
\end{enumerate}
By a well-known theorem of Fokas, Its and Kitaev \cite{FokasIts}
(see also Baik et al. \cite{BaikBook}),
we have
\begin{theorem}
The unique solution to the above RHP is given by
\begin{equation}
    Y(z)
:=
    \begin{pmatrix}
    \pi_{N,n}(z)  &  \sum\limits_{k=0}^{N-1}\dfrac{ \pi_{N,n}(k) }{z-k}\\
    \\
    \gamma_{N,n-1}^2\pi_{N,n-1}(z)  &  \sum\limits_{k=0}^{N-1}
    \dfrac{\gamma_{N,n-1}^2\pi_{N,n-1}(k) }{ z-k }
    \end{pmatrix}
.
\label{Y}
\end{equation}
\end{theorem}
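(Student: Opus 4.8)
The plan is to establish the theorem in two stages: first verify directly that the matrix $Y(z)$ in \eqref{Y} satisfies all three conditions $(Y_a)$--$(Y_c)$, and then show that any solution must coincide with it by a Liouville-type argument. The existence half is a matter of checking the three defining properties against the algebraic facts about $\pi_{N,n}$, while the uniqueness half is where the rank-one structure of the residue condition does the real work.

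For existence, I would argue as follows. Since $\pi_{N,n}$ and $\pi_{N,n-1}$ are polynomials, the first column of $Y$ is entire, and each Cauchy-type sum $\sum_{k=0}^{N-1}\pi_{N,n}(k)/(z-k)$ is a rational function whose only singularities are simple poles at the points of $\mathbb{N}=\{0,1,\dots,N-1\}$; this gives $(Y_a)$. Condition $(Y_b)$ follows by computing, at each $k\in\mathbb{N}$,
$$
    \Res\limits_{z=k}\sum_{j=0}^{N-1}\frac{\pi_{N,n}(j)}{z-j}=\pi_{N,n}(k),
$$
and likewise for the $(2,2)$ entry, so that the residue of the second column reproduces the value of the first column at $k$, which is exactly the prescribed form $\lim_{z\to k}Y(z)\left(\begin{smallmatrix}0&1\\0&0\end{smallmatrix}\right)$. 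For $(Y_c)$ I would expand the Cauchy sums in powers of $1/z$ and invoke orthogonality: because $\pi_{N,n}$ is monic of degree $n$ and orthogonal to all polynomials of lower degree, one has $\sum_k k^m\pi_{N,n}(k)=0$ for $0\le m\le n-1$ and $\sum_k k^n\pi_{N,n}(k)=1/\gamma_{N,n}^2$. This makes the $(1,2)$ entry $O(z^{-n-1})$ with leading coefficient $1/\gamma_{N,n}^2$, and a parallel computation (using the explicit value of $\gamma_{N,n-1}^2$) makes the $(2,1)$ and $(2,2)$ entries behave like $\gamma_{N,n-1}^2 z^{n-1}$ and $z^{-n}$, respectively. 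Multiplying by $\mathrm{diag}(z^{-n},z^{n})$ then yields $I+O(1/z)$, which is precisely the normalization in $(Y_c)$.

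For uniqueness, I would first show $\det Y\equiv1$. Although the second column of $Y$ has simple poles on $\mathbb{N}$, the residue condition forces the residue of the second column to be proportional to the first column, so the only potentially singular cross term in $\det Y$ cancels at every $k$; hence $\det Y$ is entire, and since $(Y_c)$ gives $\det Y\to1$ as $z\to\infty$, Liouville's theorem yields $\det Y\equiv1$. In particular $Y(z)^{-1}$ exists as a meromorphic matrix. Given any other solution $\tilde Y$, set $R(z):=\tilde Y(z)Y(z)^{-1}$, which is analytic off $\mathbb{N}$. At each $k$ I would remove the poles simultaneously: the matrix $Y(z)\left(\begin{smallmatrix}1&-1/(z-k)\\0&1\end{smallmatrix}\right)$ is analytic and has determinant $1$ at $k$ (this triangular multiplier is exactly what cancels the pole in the second column), and the same holds for $\tilde Y$; writing $R$ as the product of these two analytic, locally invertible factors shows $R$ is analytic at $k$. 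Thus $R$ is entire, and from $(Y_c)$ for both $Y$ and $\tilde Y$ one finds $R(z)=I+O(1/z)$ at infinity, so Liouville's theorem again gives $R\equiv I$, i.e. $\tilde Y=Y$.

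The only genuinely delicate point is the bookkeeping of pole cancellations, namely verifying that $\det Y$ is entire and that the triangular right-multiplier $\left(\begin{smallmatrix}1&-1/(z-k)\\0&1\end{smallmatrix}\right)$ simultaneously regularizes both $Y$ and $\tilde Y$ at each lattice point. Once the rank-one form of the residue in $(Y_b)$ is used correctly, these reduce to two elementary cancellations, and the remainder is the standard Fokas--Its--Kitaev argument adapted to the discrete support $\mathbb{N}$.
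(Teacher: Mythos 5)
Your proof is correct, but note that the paper itself contains no proof of this theorem: it simply invokes it as ``a well-known theorem of Fokas, Its and Kitaev,'' deferring to \cite{FokasIts} and \cite{BaikBook}. What you have written is the standard Fokas--Its--Kitaev argument, correctly adapted to the discrete setting, and it is in substance the proof found in those references: existence follows from the residue computation $\Res_{z=k}\sum_{j}\pi_{N,n}(j)/(z-j)=\pi_{N,n}(k)$ together with the orthogonality relations $\sum_k k^m\pi_{N,n}(k)=0$ for $0\le m\le n-1$ and $\sum_k k^n\pi_{N,n}(k)=1/\gamma_{N,n}^2$, which give exactly the decay $O(z^{-n-1})$ of the $(1,2)$ entry and the behavior $z^{-n}(1+O(1/z))$ of the $(2,2)$ entry; uniqueness follows from $\det Y\equiv 1$, the local regularization by the unipotent factor $\left(\begin{smallmatrix}1 & -1/(z-k)\\ 0 & 1\end{smallmatrix}\right)$, and Liouville's theorem. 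Two points are worth making explicit. First, your reading of the paper's $\mathbb{N}$ as the node set $\{0,1,\dots,N-1\}$ is not just a convention but a necessity: with the usual meaning of $\mathbb{N}$, condition $(Y_b)$ would fail for the claimed solution at integers $k\ge N$, where the residues vanish while $\pi_{N,n}(k)\neq 0$ in general. Second, in the uniqueness step both the vanishing of the residue of $\det Y$ at each node and the invertibility of $Y(z)\left(\begin{smallmatrix}1 & -1/(z-k)\\ 0 & 1\end{smallmatrix}\right)$ at $k$ rely on the poles being at most simple and on the rank-one residue structure holding for \emph{both} solutions $Y$ and $\tilde Y$; this is where hypothesis $(Y_b)$ is genuinely used, and your sketch applies it correctly. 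Your write-up therefore supplies a self-contained proof of a statement the paper only cites.
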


Let $X_N$ denote the set defined by
$$
    X_N:=\{x_{N,k} \}_{k=0}^{N-1}, \qquad \text{where\ \ } x_{N,k}:=\frac{k+1/2}{N}.
$$
The $x_{N,k}$'s are called \emph{nodes} and they all lie in the interval $(0,1)$.
Following Baik et al. \cite{BaikBook},
we introduce the first transformation
\begin{align}
    H(z)
&:=
    \begin{pmatrix}
     N^{-n}  & 0 \\
    0 & N^n
    \end{pmatrix}
    \,  Y(Nz-1/2)
        \begin{pmatrix}
         \prod\limits_{j=0}^{N-1}(z-x_{N,j})^{-1} & 0 \\
        0 & \prod\limits_{j=0}^{N-1}(z-x_{N,j})
        \end{pmatrix}
\nonumber
\\
&\ =
    N^{-n\sigma_3}\, Y(Nz-1/2)\left[
    \prod\limits_{j=0}^{N-1}(z-x_{N,j})  \right]^{-\sigma_3},
\label{H}
\end{align}
where $\sigma_3:=\begin{pmatrix} 1 & 0\\ 0 & -1  \end{pmatrix}$ is a Pauli matrix.
\\\\
% Our purpose is  to modify the matrix $Y(z)$ and reverse the triangularity of the residue matrices.
% The Riemann-Hilbert problem corresponding to $H(z)$ is given below.
A straightforward calculation shows that
%it follows by a straightforward calculation
$H(z)$ is a solution of the
RHP:
\begin{enumerate}
\item[($\,H_a$)] $H(z)$ is analytic for $z\in \mathbb{C}\setminus X_N$;
\item[($\,H_b$)] at each $x_{N,k}\in X_N$, the second column of $H(z)$ is analytic and
the first column of $H(z)$ has a simple pole where the residue is
$$
\Res\limits_{z=x_{N,k}} H(z)=\lim_{z\rightarrow
x_{N,k}} H(z)
\begin{pmatrix} 0 & 0\  \\ \prod\limits^{N-1}_{j=0\atop j\neq k}(z-x_{N,j}  )^{-2}  & 0\  \end{pmatrix};
$$
%%%%%%%%%%%%%%%%%%%%%%%%%%%%%%%%%%%%%%%%%%%%%%%%%%%%%%%%%%%
%%%%%%%%%%%%%%%%%%%%%%%%%%% Page 4 %%%%%%%%%%%%%%%%%%%%%%%%
%%%%%%%%%%%%%%%%%%%%%%%%%%%%%%%%%%%%%%%%%%%%%%%%%%%%%%%%%%%
\item[($\,H_c$)] as $z\rightarrow \infty$,
$$
H(z)=\left(  I + O\left(\frac1z\right)  \right)
\begin{pmatrix}
z^{n-N}  & 0\\
0 & z^{-n+N}
\end{pmatrix}.
$$
\end{enumerate}

We next consider the second transformation which will remove the poles as well as
transform the discrete RHP into a continuous one.
Let $\delta>0$ be a sufficiently small number, and we define
\begin{equation}
    R(z)
:=  H(z)
    \begin{pmatrix}
    1 & \ 0 \ \   \\\\
    \dfrac{\mp ie^{ \pm N \pi iz  }  \cos(N\pi z) }{   N\pi   \prod_{j=0}^{N-1}( z-x_{N,j} )^2  } & \ 1 \
    \end{pmatrix}
\label{R_H-a}
\end{equation}
for $z\in\Omega_{\pm}$, and
\begin{equation}
R(z):=H(z)
\label{R_H}
\end{equation}
for $z\notin\Omega_{\pm}$,
where the domains $\Omega_{\pm}$ are shown in Figure \ref{Figure_R}.
Let $\Sigma_+$ be the boundary of $\Omega_+$ in the upper half-plane, and $\Sigma_-$ be the mirror image of $\Sigma_+$ in the lower half-plane. For the contour $\Sigma=(0,1)\cup\Sigma_\pm$, see also Figure
\ref{Figure_R}.
\begin{figure}[htbp]
\begin{center}
\includegraphics[scale=1]{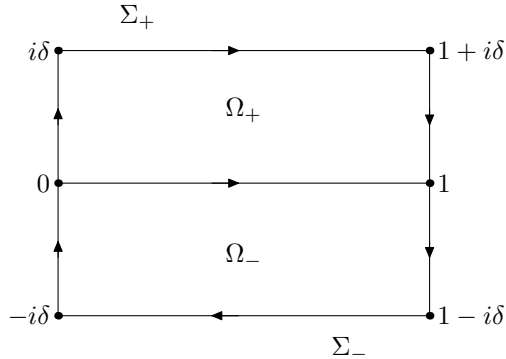}
\caption{The domains $\Omega_{\pm}$ and the contour $\Sigma$.}
\label{Figure_R}
\end{center}
\end{figure}

\begin{lemma}
For each $x_{N,k}\in X_N$,  the singularity of $R(z)$ at $x_{N,k}$ is removable; that is,
$\Res\limits_{z=x_{N,k}}R(z)=0$.
\end{lemma}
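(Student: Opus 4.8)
The plan is to reduce the claim to a single residue computation in the first column of $R(z)$, since the transformation \eqref{R_H-a} leaves the second column unchanged. Writing $H=(H_1\mid H_2)$ in columns and $H_2=(H_{12},H_{22})^{\mathsf T}$, one has $R=(H_1+b\,H_2\mid H_2)$, where
$$
b(z)=\frac{\mp i e^{\pm N\pi iz}\cos(N\pi z)}{N\pi\prod_{j=0}^{N-1}(z-x_{N,j})^2}.
$$
By property ($H_b$), the second column $H_2$ is analytic at each node $x_{N,k}$, so $R_2=H_2$ carries no singularity there. Thus it suffices to show that the simple pole of $H_1$ at $x_{N,k}$ is cancelled by the pole of $b\,H_2$.

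First I would determine the pole structure of $b$ at $x_{N,k}$. The factor $\prod_j(z-x_{N,j})^{-2}$ contributes a double pole, but the numerator contains $\cos(N\pi z)$, which vanishes at every node because $N\pi x_{N,k}=\pi(k+\tfrac12)$ and $\cos\pi(k+\tfrac12)=0$. Hence $\cos(N\pi z)$ has a simple zero at $x_{N,k}$, and $b$ has only a simple pole there. A one-term Taylor expansion gives $\cos(N\pi z)=-N\pi(-1)^k(z-x_{N,k})+O((z-x_{N,k})^2)$, using $\sin\pi(k+\tfrac12)=(-1)^k$. Combining this with the evaluation $e^{\pm N\pi ix_{N,k}}=e^{\pm i\pi(k+1/2)}=(-1)^k(\pm i)$, the residue of $b$ comes out to
$$
\Res_{z=x_{N,k}}b(z)=\frac{-1}{\prod_{j\neq k}(x_{N,k}-x_{N,j})^2},
$$
and a short check shows the two correlated choices of sign in $\Omega_\pm$ yield the same value.

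Next I would read off the residue of the first column of $H$ from ($H_b$). Since the matrix multiplying $H$ there has analytic entries at $x_{N,k}$ and selects the analytic second column, the limit exists and
$$
\Res_{z=x_{N,k}}H_1(z)=\frac{1}{\prod_{j\neq k}(x_{N,k}-x_{N,j})^2}\begin{pmatrix}H_{12}(x_{N,k})\\ H_{22}(x_{N,k})\end{pmatrix}.
$$
Because $H_2$ is analytic at $x_{N,k}$, we have $\Res(b\,H_2)=\bigl(\Res b\bigr)H_2(x_{N,k})$, which by the computation above is exactly the negative of $\Res H_1$. Adding the two columns gives $\Res_{z=x_{N,k}}R(z)=0$, and the identical argument applies at every node.

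I expect the only delicate point to be the bookkeeping of signs: keeping the correlated $\pm$ from $\Omega_\pm$ consistent through the exponential $e^{\pm N\pi iz}$ and the derivative of the cosine, and verifying that the $(-1)^k$ factors coming from $e^{\pm i\pi(k+1/2)}$ and from the expansion of $\cos(N\pi z)$ combine to produce precisely the constant $-1$ needed for the cancellation, independently of the parity of $k$ and of the sheet $\Omega_\pm$. Everything else is a routine residue calculation.
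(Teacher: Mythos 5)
Your proposal is correct and follows essentially the same route as the paper's own proof: the second column of $R$ equals the analytic second column of $H$, and in the first column the residue of $b(z)$, namely $-\prod_{j\neq k}(x_{N,k}-x_{N,j})^{-2}$, exactly cancels the residue of the first column of $H$ dictated by ($H_b$). The only differences are cosmetic — you treat both rows at once in column form and spell out the residue computation of $b$ (including the sign bookkeeping) that the paper simply asserts as ``readily seen.''
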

\begin{proof}
For  $z\in \Omega_{\pm}$, it follows from (\ref{R_H-a}) that
\begin{equation}
R_{11}(z)=H_{11}(z)+H_{12}(z)
\dfrac{\mp ie^{ \pm N \pi iz  }  \cos(N\pi z) }{   N\pi   \prod_{j=0}^{N-1}( z-x_{N,j} )^2  }
,
\qquad
R_{12}(z)=H_{12}(z)
.
\label{R_11}
\end{equation}
Since $H_{12}(z)$ is  analytic by ($H_b$),
the residue of $R_{12}(z)$ at $x_{N,k}$ is zero.
From ($H_b$), we observe that the residue
of $H_{11}(z)$ at $z=x_{N,k}$ is
\begin{equation}
\Res\limits_{z=x_{N,k}}H_{11}(z)=H_{12}(x_{N,k})\prod\limits^{N-1}_{j=0\atop j\neq k}(x_{N,k}-x_{N,j}  )^{-2}.
\label{H_11}
\end{equation}
%%%%%%%%%%%%%%%%%%%%%%%%%%%%%%%%%%%%%%%%%%%%%%%%%%%%%%%%%%%
%%%%%%%%%%%%%%%%%%%%%%%%%%% Page 5 %%%%%%%%%%%%%%%%%%%%%%%%
%%%%%%%%%%%%%%%%%%%%%%%%%%%%%%%%%%%%%%%%%%%%%%%%%%%%%%%%%%%
On the other hand, it is readily seen that
\begin{equation}
    \Res\limits_{z=x_{N,k}}
    \dfrac{\mp ie^{ \pm N \pi iz  }  \cos(N\pi z) }{   N\pi   \prod_{j=0}^{N-1}( z-x_{N,j} )^2  }
=
    -\prod\limits^{N-1}_{j=0\atop j\neq k}(x_{N,k}-x_{N,j}  )^{-2}.
\label{H_12}
\end{equation}
Thus, applying (\ref{H_11}) and (\ref{H_12}) to (\ref{R_11}) shows
that the residue of $R_{11}(z)$ at $z=x_{N,k}$ is zero.
The entries in the second row of the matrix $R(z)$ can be studied similarly. This completes the proof of the lemma.
\end{proof}

Note that
this transformation makes $R_+(x)$ and $R_-(x)$
continuous on the interval (0,1).
Therefore, $R(z)$ is a solution of the following RHP:
\begin{enumerate}
\item[($R_a$)] $R(z)$ is analytic for $z\in \mathbb{C}\setminus \Sigma$;
\item[($R_b$)] the jump conditions on the contour $\Sigma$: for $x\in (0,1)$,
\begin{equation}
    R_+(x)
=
    R_-(x)
    \begin{pmatrix}
    1 & 0\\
    r(x) & 1
    \end{pmatrix}
,
\label{R_real}
\end{equation}
where
\begin{equation}
r(x)= \frac{4\cos^2(N\pi x)}{ W
\prod\limits_{j=0}^{N-1}(x-x_{N,j})^2 };
\label{r}
\end{equation}
for $z\in \Sigma_{\pm}$,
\begin{equation}
R_+(z)=R_-(z)
\begin{pmatrix}
1 & 0\\
\widetilde{r}_{\pm}(z)  & 1
\end{pmatrix},
\
\end{equation}
where
\begin{equation}
\widetilde{r}_{\pm}(z)=\frac{\mp 2 e^{ \pm N\pi iz }
\cos(N\pi z )}{  W \prod\limits_{j=0}^{N-1}(
z-x_{N,j} )^2  } ;
\label{rtilde}
\end{equation}
\item[($R_c$)] as $z\rightarrow \infty$,
\begin{equation}
R(z)=\left(  I + O\left(\frac1z\right)  \right)
\begin{pmatrix}
z^{n-N}  & 0\\
0 & z^{-n+N}
\end{pmatrix}.
\label{R_inf}
\end{equation}
\end{enumerate}
For simplicity in the following section, here we have introduced the  notation $W:=2N\pi i$.

\section{The Auxiliary Functions}

For our subsequent analysis, we
need  some auxiliary functions.
Consider the monic orthogonal polynomials $p_{N,n}(z):=N^{-n}\pi_{N,n}(Nz-1/2)$.
From (\ref{recurrence relations}) and (\ref{pi_definition}), we have
\begin{equation}
    p_{N,n+1}(z)=(z-a_n)p_{N,n}(z)-b_{n}p_{N,n-1}(z),
\end{equation}
where
$$
    a_n=\frac12, \qquad \qquad b_n=
     \frac{ 1-(n/N)^2 }{ 16-4/n^2 }.
$$

%%%%%%%%%%%%%%%%%%%%%%%%%%%%%%%%%%%%%%%%%%%%%%%%%%%%%%%%%%%
%%%%%%%%%%%%%%%%%%%%%%%%%%% Page 6 %%%%%%%%%%%%%%%%%%%%%%%%
%%%%%%%%%%%%%%%%%%%%%%%%%%%%%%%%%%%%%%%%%%%%%%%%%%%%%%%%%%%

Using the method introduced by  Kuijlaars and  Van Assche \cite{Arno}, we can derive the equilibrium measure corresponding to the discrete Chebyshev polynomials in our case. Recall $c:=n/N$, and let
\begin{equation}
    a:=\frac12 -\frac12 \sqrt{1-c^2},
\qquad \qquad
    b:=\frac12+\frac12 \sqrt{1-c^2}.
\label{MRS}
\end{equation}
The density function is given by
\begin{equation}
  \mu(x)=\left\{\begin{array}{ll}
  \dfrac{2}{\pi c}  \arcsin( \dfrac{c}{2\sqrt{ x-x^2 } } ), &  x\in[a,b],\\
\\
  \dfrac1c,  & x\in[0,a] \cup [b,1].
  \end{array}\right.
\label{density function}
\end{equation}

As usual, we now define the auxiliary $g$-function and $\phi$-function.
\begin{defi}
The $g$-function is the logarithmic potential  defined by
\begin{equation}
g(z):=\int_{0}^{1}  \log(z-s) \mu(s)\,ds, \qquad \qquad z\in \mathbb{C} \setminus (-\infty,1],
\label{g}
\end{equation}
and the so-called $\phi$-function is defined by
\begin{equation}
\phi(z):=\frac l 2 -g(z)
\label{phi}
\end{equation}
for $z\in \mathbb{C} \setminus (-\infty,1]$,
where $l:=2\int_0^1\log|b-s|\mu(s) ds$ is called the Lagrange multiplier.
\end{defi}

An explicit formula for the $g$-function is given in (\ref{gexplicit}) in Section 6. On account of (\ref{density function}), the derivative of $g(z)$
is given by
\begin{equation}
  g'(z)
=
  \frac2c \log\left( \frac{z+\sqrt{ (z-a)(z-b) }+ c/2 }{ (z-1)+\sqrt{ (z-a)(z-b) } -c/2 } \right)
+
  \frac1c \log \left(  \frac{ z-1 }{ z } \right)
.
\label{g1}
\end{equation}
Introduce the ${\phi}^*$-function
\begin{equation}
   \phi^*(z)
:=
    \int_b^{z} \left(-g'(s)\mp \frac{1}{c} \pi i \right) ds
=
    \phi(z)\pm\frac1c \pi i (1-z)
\label{phistar}
\end{equation}
for $z\in \mathbb{C}_{\pm}$.
It is readily seen that
\begin{align}
    {\phi}^*(z)
&=
    -\int_b^{z}
    \left(\frac2c \log\left( \frac{s+\sqrt{ (s-a)(s-b) }+c/2 }{ 1-s-\sqrt{ (s-a)(s-b) } +c/2 } \right)
+
    \frac1c \log \left(  \frac{ 1-s }{ s } \right) \right)
  ds
\label{phistar_integral}
\end{align}
for $z\in \mathbb{C}\backslash (-\infty,b] \cup [1,\infty)$.

Similarly, we also set
\begin{align}
    \widetilde{\phi}(z)
&:=
     \int_a^{z} \left(-g'(s)\mp \frac{1}{c} \pi i \right) ds
=
    \phi\pm \pi i (1-\frac{1}{c}z )
\label{phitilde}
\end{align}
for $z\in \mathbb{C}_{\pm}$.
Note that for $x\in(0,a)$, we have from (\ref{g})
\begin{equation*}
	g_{\pm}(x)
=
	\int_0^1 \log|x-s|\mu(s)ds \pm \pi i \int_x^1 \mu(s)ds.
\end{equation*}
%%%%%%%%%%%%%%%%%%%%%%%%%%%%%%%%%%%%%%%%%%%%%%%%%%%%%%%%%
%%%%%%%%%%%%%%%%%%%%%%% Page 7 %%%%%%%%%%%%%%%%%%%%%%%%%%
%%%%%%%%%%%%%%%%%%%%%%%%%%%%%%%%%%%%%%%%%%%%%%%%%%%%%%%%%
Thus, from (\ref{phitilde}) and (\ref{phi}),
it follows that $\pt_+(x)=\pt_-(x)$ for $x\in(0,a)$; that is,
$\pt(z)$ can be analytically continued to the interval
$(0,a)$.

The functions $\widetilde{\phi}(z)$ and $\phi^*(z)$
play an important role in our argument, and the following are
some of their properties.

\begin{prop}
For $x\in (a,b)$, we have
\begin{equation}
    \phi^*_+(x)+\phi^*_-(x)=0
,\qquad
    \widetilde\phi_+(x)+\widetilde\phi_-(x)=0
.
\label{phi_jump_ab}
\end{equation}
Furthermore,
we have
\begin{equation}
    \phi^*(x)<0
\qquad
    \text{for } x\in (b,1)
\qquad
\text{and}
\qquad
   \widetilde\phi(x)<0
\qquad
    \text{for } x\in (0,a)
.
\label{phi_less_0}
\end{equation}
For any $x\in(b,1)$ and sufficiently small $\delta>0$,
we have
\begin{equation}
     \text{\upshape Re\,}\phi^*(x\pm i\delta )
=
    \phi^*(x)+O(\delta^2)
,\qquad
    \text{\upshape Re\,}\phi(x\pm i\delta )
=
    \phi^*(x)- \frac1c\pi \delta +O(\delta^2)
.
\label{phistar_re}
\end{equation}
For any $x\in(0,a)$ and sufficiently small $\delta>0$,
we have
\begin{equation}
    \text{\upshape Re\,}\widetilde\phi(x\pm i\delta )
=
    \widetilde\phi(x)+O(\delta^2)
,\qquad
    \text{\upshape Re\,}\phi(x\pm i\delta )
=
    \widetilde\phi(x)- \frac1c\pi \delta +O(\delta^2)
.
\label{phitilde_re}
\end{equation}

\end{prop}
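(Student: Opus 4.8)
The plan is to reduce all four assertions to the boundary behaviour of $g'$, which is available in closed form from (\ref{g1}), together with the two identities $a+b=1$ and $ab=c^2/4$ that follow immediately from (\ref{MRS}). Throughout I write $U(x):=\int_0^1\log|x-s|\mu(s)\,ds$ for the real part of the logarithmic potential and $S(z):=\sqrt{(z-a)(z-b)}$ for the branch behaving like $z$ at infinity, and I use the identities $\phi^*=\phi\pm\frac1c\pi i(1-z)$ and $\widetilde\phi=\phi\pm\pi i(1-\frac1c z)$ from (\ref{phistar}) and (\ref{phitilde}) to transfer every statement about $\phi$ to the two auxiliary functions.

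For the symmetry relations (\ref{phi_jump_ab}), the extra terms $\pm\frac1c\pi i(1-x)$ and $\pm\pi i(1-\frac1c x)$ cancel when the two boundary values are added, so it suffices to prove $\phi_+(x)+\phi_-(x)=0$ on $(a,b)$, equivalently $U(x)=l/2$ there. I would establish this by differentiation: on the band $S_\pm(x)=\pm iR(x)$ with $R(x):=\sqrt{(x-a)(b-x)}>0$, and feeding these conjugate values into (\ref{g1}) the two logarithms combine into the real quantities $\frac2c\log\frac{(x+c/2)^2+R^2}{(x-1-c/2)^2+R^2}$ and $\frac2c\log\frac{1-x}{x}$. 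Using $(x+c/2)^2+R^2=x(1+c)$ and $(x-1-c/2)^2+R^2=(1-x)(1+c)$ these cancel, so $g'_+(x)+g'_-(x)=0$ and hence $U(x)$ is constant on $(a,b)$. Its value is fixed by continuity at $b$, where $\phi^*$ is real with $\phi^*(b)=l/2-U(b)=0$ (by the definition of $l$), forcing $U\equiv l/2$. The claim for $\widetilde\phi$ is identical.

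The negativity statement (\ref{phi_less_0}) is where I expect the real work to lie. On $(b,1)$ the term $\mp\frac1c\pi i$ in (\ref{phistar}) is designed precisely to cancel the imaginary part that $g$ picks up across the cut, so $\phi^*$ extends analytically across $(b,1)$, is real there, and equals $l/2-U(x)$ with $\phi^*(b)=0$; differentiating gives $(\phi^*)'(x)=-F(x)$ with $F(x)=\frac2c\log\frac{x+S+c/2}{1-x-S+c/2}+\frac1c\log\frac{1-x}{x}$ and $S>0$ real. The crux is the strict inequality $F(x)>0$ on $(b,1)$, after which $\phi^*$ decreases from $0$ and the conclusion follows. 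Writing $w:=\sqrt{x(1-x)}$ and noting that $S=\sqrt{(c/2)^2-w^2}$ there, $F>0$ is equivalent to $\sqrt{1+2w}\,\sqrt{(c/2)^2-w^2}>\sqrt{1-2w}\,(c/2-w)$; squaring both (positive) sides and cancelling the factor $c/2-w>0$ collapses this to $2w(1+c)>0$, which is immediate. Hence $\phi^*<0$ on $(b,1)$, and $\widetilde\phi<0$ on $(0,a)$ follows by the same computation with the roles of $a$ and $b$ interchanged (the density $\mu$ is symmetric about $x=1/2$).

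Finally, for the local expansions (\ref{phistar_re}) and (\ref{phitilde_re}) I would Taylor expand the analytic continuation. Since $\phi^*$ continues analytically across $(b,1)$ and $(\phi^*)'(x)$ is real there, $\phi^*(x\pm i\delta)=\phi^*(x)\pm i\delta(\phi^*)'(x)+O(\delta^2)$ has a purely imaginary linear term, giving $\operatorname{Re}\phi^*(x\pm i\delta)=\phi^*(x)+O(\delta^2)$. For $\phi$ I substitute $\phi=\phi^*\mp\frac1c\pi i(1-z)$ and evaluate at $z=x\pm i\delta$; the explicit factor contributes exactly $-\frac1c\pi\delta$ to the real part, producing $\operatorname{Re}\phi(x\pm i\delta)=\phi^*(x)-\frac1c\pi\delta+O(\delta^2)$. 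The estimates on $(0,a)$ are obtained verbatim with $\widetilde\phi$ in place of $\phi^*$. The only thing demanding care in this last step is the branch bookkeeping that makes the $O(\delta)$ real part vanish for $\phi^*$ while leaving precisely $-\frac1c\pi\delta$ for $\phi$; the remaining estimates are routine.
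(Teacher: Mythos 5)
Your proof is correct, and its overall skeleton matches the paper's: exploit the explicit boundary values coming from (\ref{g1}), reduce (\ref{phi_jump_ab}) to the algebraic identities $(x+c/2)^2+R^2=x(1+c)$ and $(1-x+c/2)^2+R^2=(1-x)(1+c)$, obtain (\ref{phi_less_0}) from an edge value plus monotonicity, and obtain (\ref{phistar_re})--(\ref{phitilde_re}) by Taylor expansion using the realness of $(\phi^*)'$ and $\widetilde\phi\,'$ on the relevant intervals (this last part is essentially verbatim the paper's argument). There are two genuine differences. First, for (\ref{phi_jump_ab}) the paper adds the two integral representations (\ref{phistar_integral}) on the upper and lower edges of the band and observes that the integrands cancel pointwise, which fixes the constant automatically since both integrals start at $b$; you instead differentiate, prove $g'_++g'_-=0$, and then pin down the constant via $\phi^*(b)=0$ --- the same algebra packaged differently, with your route needing (and correctly supplying) the extra endpoint-normalization step. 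Second, and more substantially, for (\ref{phi_less_0}) the paper establishes negativity just beyond the band edge via the local expansion $\phi^*(z)=-\frac{8}{3c^2}(1-c^2)^{1/4}(z-b)^{3/2}+O(\varepsilon^2)$ and then merely asserts the sign $(\phi^*)'(x)<0$ on $(b,1)$ as ``readily seen''; you bypass the local expansion by using $\phi^*(b)=0$ directly and, crucially, you actually prove the derivative's sign: the substitution $w=\sqrt{x(1-x)}$, $S=\sqrt{(c/2)^2-w^2}$ reduces the required inequality to $2w(1+c)>0$ after squaring, and I have checked that this reduction is correct. So your argument fills in the one step the paper leaves unproved. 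The only small omission: your squaring manipulations implicitly require $1-x+c/2-S>0$ on $(b,1)$, so that the fraction inside the logarithm is positive and order is preserved; this deserves one line, and it holds because $1-x+c/2>0$ and $(1-x+c/2)^2-S^2=(1-x)(1+c)>0$.
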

\begin{proof}
Since
\begin{equation}
    \phi^*_{\pm}(x)
=
    -\int_b^{x}
    \left(
    \frac2c \log\left( \frac{s\pm i\sqrt{ (s-a)(b-s) }+c/2 }{ 1-s\mp i\sqrt{ (s-a)(b-s) } +c/2 } \right)
+
    \frac1c \log \left(  \frac{ 1-s }{ s } \right)
    \right)
    ds
\end{equation}
for $x\in(a,b)$,
it is easy to verify that
\begin{equation}
    \phi^*_+(x)+\phi^*_-(x)
=
    -\int_b^{x}
    \left(
    \frac2c \log\left( \frac{s+sc }{ (1-s)(1+c)} \right)
+
    \frac2c \log \left(  \frac{ 1-s }{ s } \right)
    \right)
    ds
=
    0
.
\label{e315}
\end{equation}
Similarly, we also have $\widetilde\phi_+(x)+\widetilde\phi_-(x)=0$
for $x\in(a,b)$, thus proving (\ref{phi_jump_ab}).
Together with (\ref{phi}) and (\ref{phistar}), this implies that
$\int_0^1 \log|x-s|\mu(s) ds \equiv l/2$ for $x\in(a,b)$.

For any small $\varepsilon>0$ and
$z\in U(b,\varepsilon):=\{z\in \mathbb{C}: |z-b|<\varepsilon \}$,
we have from (\ref{phistar_integral})
\begin{align}
    \phi^*(z)
&=
     -\int_b^{z}
     \left[
     \frac2c
     \left(
     \frac{ \sqrt{ (b-a)(s-b) } }{ b+c/2 } +
     \frac{ \sqrt{ (b-a)(s-b) } }{ 1-b+c/2 }
     \right)
     +O(\varepsilon)
     \right]
  ds
\nonumber
\\
&=
     -\frac{8 }{3c^2 } (1-c^2)^{1/4} (z-b)^{3/2}
+
    O(\varepsilon^2)
.
\end{align}
Here, we have used the fact that $a+b=1$, $b-a=(1-c^2)^{1/2}$ and
$ab=c^2/4$; see (\ref{MRS}). By straightforward calculation,
it is readily seen from \eqref{phistar_integral} that
\begin{align}
    {\phi^*}'(x)
&=
    -
    \frac2c \log\left( \frac{x+\sqrt{ (x-a)(x-b) }+c/2 }{ 1-x-\sqrt{ (x-a)(x-b) } +c/2 } \right)
-
  \frac1c \log \left(  \frac{ 1-x }{ x } \right)
\notag
\\
&=
	-\frac1c \log
	\left[
		\left(
		\frac{ x+\sqrt{ (x-a)(x-b) }+c/2  }
		{ 1-x-\sqrt{ (x-a)(x-b) } +c/2 }
		\right)^2
		\left( \frac{1-x}{x}  \right)
	\right]
<0
\label{e317}
\end{align}
%%%%%%%%%%%%%%%%%%%%%%%%%%%%%%%%%%%%%%%%%%%%%%%%%%%%%%%%%%%%
%%%%%%%%%%%%%%%%%%%%%%  Page 8 %%%%%%%%%%%%%%%%%%%%%%%%%%%%%
%%%%%%%%%%%%%%%%%%%%%%%%%%%%%%%%%%%%%%%%%%%%%%%%%%%%%%%%%%%%
for $b<x<1$. Thus, we obtain
$\phi^*(x)<\phi(b+\varepsilon)<0$.

In the same manner, if
$z\in U(a,\varepsilon):=\{z\in \mathbb{C}: |z-a|<\varepsilon \}$,
%we have from (\ref{phitilde}) that
then
\begin{align}
    \widetilde\phi(z)
&=
     \int_a^{z}
    \left[
    \frac2c
    	\left(
    	\frac{ \sqrt{ (a-s)(b-a) } }{ a+c/2 }
    	+
    	\frac{ \sqrt{ (a-s)(b-a) } }{ 1-a+c/2 }
    	\right)
    	+O(\varepsilon)
    \right]
  ds
\nonumber
\\
&=
     -\frac{8 }{3c^2 } (1-c^2)^{1/4} (a-z)^{3/2}
+
    O(\varepsilon^2)
\label{phitilde_experssion}
\end{align}
and
\begin{align}
    \widetilde\phi '(x)
&=
    -
    \frac2c \log\left( \frac{x-\sqrt{ (a-x)(b-x) }+c/2 }{ 1-x+\sqrt{ (a-x)(b-x) } +c/2 } \right)
-
  \frac1c \log \left(  \frac{ 1-x }{ x } \right)
\notag
\\
&=
	-\frac1c \log
	\left[
		\left(
		\frac{ x+\sqrt{ (x-a)(x-b) }+c/2  }
		{ 1-x-\sqrt{ (x-a)(x-b) } +c/2 }
		\right)^2
		\left( \frac{1-x}{x}  \right)
	\right]
>0
\label{e319}
\end{align}
for $0<x<a$.
Consequently,
$\widetilde\phi(x)<\widetilde\phi(a-\varepsilon)<0$ for $0<x<a$,
thus proving (\ref{phi_less_0}).

For any $x\in(b,1)$ and sufficiently small $\delta>0$,
we have from the Taylor expansion
\begin{align}
    \text{\upshape Re\,}\phi^*(x\pm i\delta )
&=
    \text{\upshape Re\,}\phi^*(x )
\mp
    \delta\,\text{\upshape Im\,}{\phi^*}'(x )+O(\delta^2)
=
    \text{\upshape Re\,}\phi^*(x )+O(\delta^2)
\label{phistar_re1}
.
\end{align}
The last equality follows from the fact that $\po{}'(x)$ is real;
see (\ref{e317}).
Coupling this with (\ref{phistar})  gives
\begin{equation}
    \text{\upshape Re\,}\phi(x\pm i\delta )
=
    \text{\upshape Re\,}\phi^*(x )
-
    \frac1c\pi\delta +O(\delta^2)
;
\end{equation}
thus (\ref{phistar_re}) holds.

Similarly, for any $x\in(0,a)$ and sufficiently small $\delta>0$,
we have
\begin{align}
    \text{\upshape Re\,}\widetilde\phi(x\pm i\delta )
&=
    \text{\upshape Re\,}\widetilde\phi(x )
\mp
    \delta\,\text{\upshape Im\,}\widetilde\phi'(x )+O(\delta^2)
=
    \text{\upshape Re\,}\widetilde\phi(x )+O(\delta^2)
.
\label{phitilde_re1}
\end{align}
From (\ref{phitilde}), it follows  that
\begin{equation}
    \text{\upshape Re\,}\phi(x\pm i\delta )
=
    \text{\upshape Re\,}\widetilde\phi(x )
-
    \frac1c\pi\delta +O(\delta^2)
.
\end{equation}
This ends the proof of (\ref{phitilde_re}).
\end{proof}
\begin{figure}[!t]
\begin{minipage}[t]{0.45\linewidth}
\centering
\includegraphics[scale=0.9]{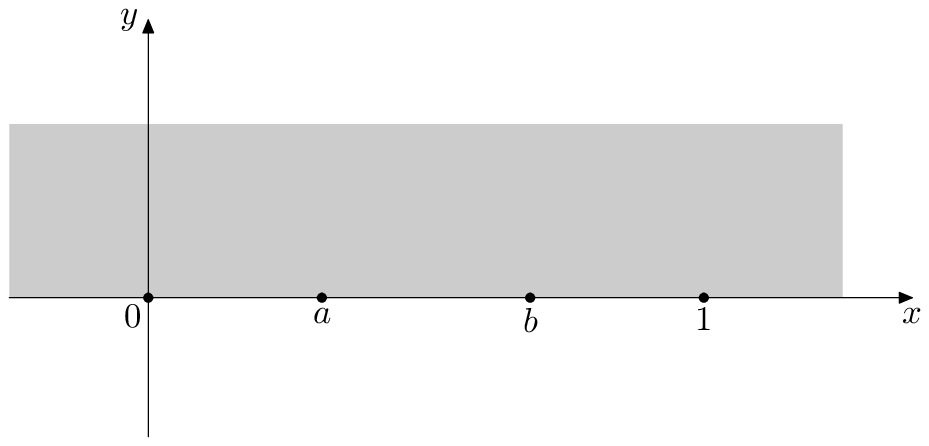}
\\
(a) $z$-plane
\end{minipage}
\hfill
\begin{minipage}[t]{0.45\linewidth}
\centering
\includegraphics[scale=1]{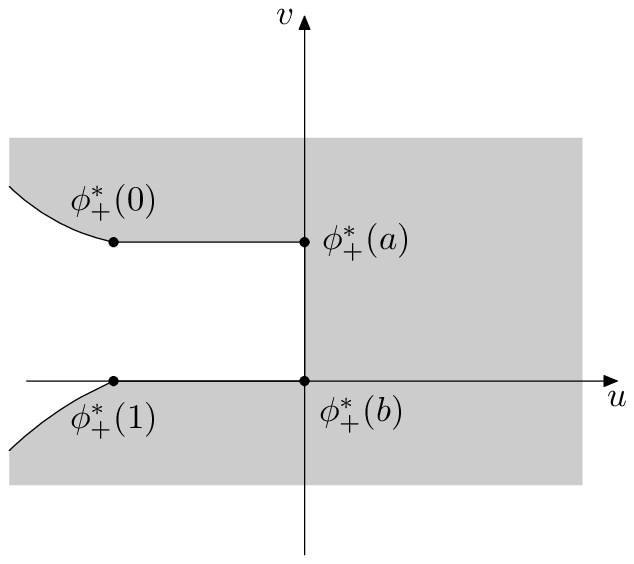}
\\
(b) $\po$-plane
\end{minipage}
\\
\\
\\
\begin{minipage}[t]{0.45\linewidth}
\centering
\includegraphics[scale=1]{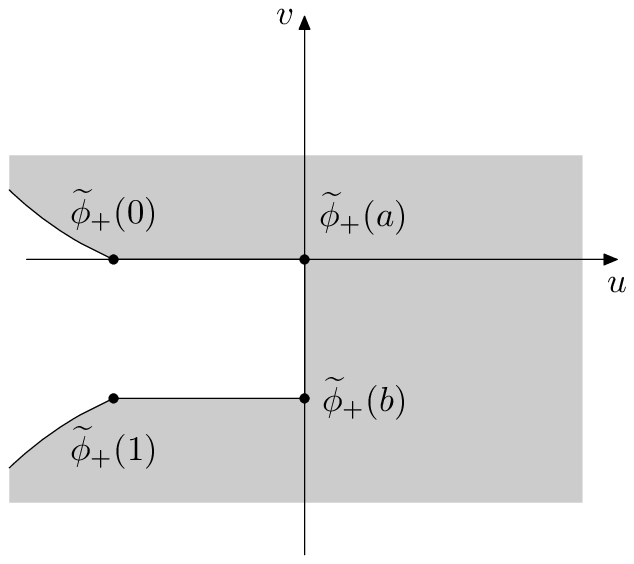}
\\
(c) $\pt$-plane
\end{minipage}
\hfill
\begin{minipage}[t]{0.45\linewidth}
\centering
\includegraphics[scale=1]{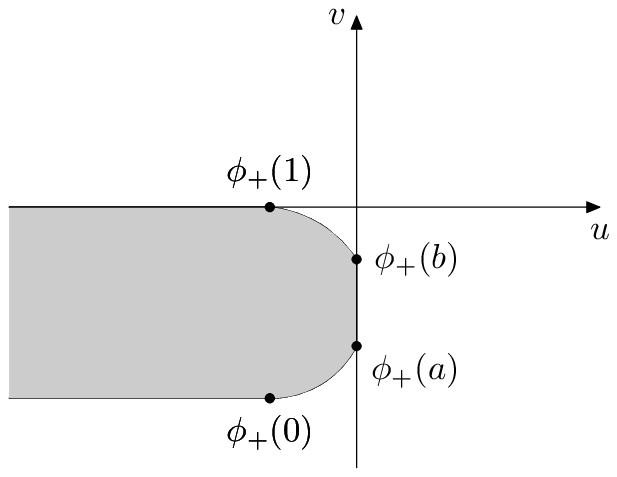}
\\
(d) $\phi$-plane
\end{minipage}
\caption{The upper half-plane under the transformations $\phi$, $\widetilde\phi$ and $\phi^*$.}
\label{Figure_phi}
\end{figure}

From the definition in (\ref{phitilde}),
it is easy to see that $\widetilde{\phi}(a)=0$
and $\widetilde{\phi}_+(b)=(1-\frac1c)\pi i $.
Furthermore, from (\ref{phistar_integral}) it follows that
$\po(b)=0$, and from (\ref{phistar}) and (\ref{phitilde}) we
have $\po_+(a)= (\frac1c -1)\pi i$. Let us now give a brief outline of
the argument used in establishing the following mapping properties of the $\phi$-function.

\begin{prop}
The images of the upper half of the $z$-plane under the mappings $\phi^*$, $\pt$ and $\phi$ are depicted in Figure \ref{Figure_phi}.
\end{prop}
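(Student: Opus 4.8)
The plan is to determine each image region by \emph{boundary correspondence}: away from the two branch points $a$ and $b$, the maps $\po$, $\pt$ and $\phi$ are analytic and locally univalent, so the image of the open upper half-plane equals the domain enclosed by the image of its boundary. I would therefore traverse the real axis from $-\infty$ to $+\infty$, together with the large semicircle $|z|=R\to\infty$, under each map, assemble the resulting image curve, and then invoke the argument principle to confirm that the upper half-plane is carried onto the enclosed region with the orientation shown in Figure~\ref{Figure_phi}.

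For $\po$ I would anchor the trace at $b$, where $\po(b)=0$. On $(b,1)$ the derivative ${\po}'(x)<0$ by \eqref{e317}, so this segment is carried monotonically onto a bounded piece of the negative real axis terminating at $\po(1)<0$. On $(a,b)$ the jump relation $\po_+(x)+\po_-(x)=0$ from \eqref{phi_jump_ab}, combined with Schwarz reflection $\po_-(x)=\overline{\po_+(x)}$, forces $\operatorname{Re}\po_+(x)=0$; hence $(a,b)$ maps into the imaginary axis, ending at $\po_+(a)=(\tfrac1c-1)\pi i$, which is positive imaginary since $c\in(0,1)$. Continuing onto $(0,a)$, $(-\infty,0)$ and the arc at infinity, I would use the growth $g(z)\sim\log z$ as $z\to\infty$ (as $\mu$ is a unit measure) together with the explicit linear term $\tfrac1c\pi i(1-z)$ in \eqref{phistar} to fix the directions in which the image escapes to infinity. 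Finally, the local expansion $\po(z)=-\tfrac{8}{3c^2}(1-c^2)^{1/4}(z-b)^{3/2}+O(\ve^2)$ pins down the corner: a half-disk of opening $\pi$ at $b$ is mapped to a sector of opening $3\pi/2$ at the image point $0$, whose edges are exactly the negative real axis and the positive imaginary axis found above, matching the vertex in panel (b).

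The same scheme applies to $\pt$, now anchored at $a$, where $\pt(a)=0$. Here ${\pt}'(x)>0$ on $(0,a)$ by \eqref{e319} with $\pt(x)<0$, giving a monotone image piece; the relation $\pt_+(x)+\pt_-(x)=0$ again yields a purely imaginary image on $(a,b)$ ending at $\pt_+(b)=(1-\tfrac1c)\pi i$; and the expansion $\pt(z)=-\tfrac{8}{3c^2}(1-c^2)^{1/4}(a-z)^{3/2}+O(\ve^2)$ fixes the corner at the image of $a$. The picture for $\phi$ then follows by transferring boundary values through the identities $\phi=\po-\tfrac1c\pi i(1-z)$ and $\phi=\pt-\pi i(1-\tfrac1c z)$ valid in $\mathbb{C}_+$: along each subinterval I would evaluate the right-hand side from the already-determined boundary values of $\po$ or $\pt$ and the known real $z$, using whichever representation is regular there, and apply the same boundary-correspondence argument; agreement on the overlaps provides a useful check.

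The main obstacle is rigour in two places. First, one must verify that each boundary trace is a \emph{simple} curve (through $\infty$), i.e.\ that the boundary maps are injective, and that ${\po}'=-g'\mp\tfrac1c\pi i$ has no zeros in the open upper half-plane; a critical point would spoil local univalence and could fold the image. Here the explicit formula \eqref{g1} for $g'$ is the tool for locating or excluding such zeros, while the sign and Taylor data of the preceding proposition---in particular the behaviour of $\operatorname{Re}\po$ and $\operatorname{Re}\pt$ near the bands in \eqref{phistar_re}--\eqref{phitilde_re}---fix the orientation of the image region. Second, because $\po$ and $\pt$ are defined by \emph{different} formulas in $\mathbb{C}_+$ and $\mathbb{C}_-$, care is needed so that the cut along $\mathbb{R}$ and the branch points are threaded consistently when splicing the segments into one closed image curve; this bookkeeping, rather than any single hard estimate, is where the argument demands the most attention.
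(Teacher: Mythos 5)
Your proposal is correct and takes essentially the same route as the paper: both arguments trace the boundary values of $\po_+$, $\pt_+$ and $\phi_+$ along the real axis interval by interval, using the derivative signs in \eqref{e317} and \eqref{e319}, the vanishing of the real part on $(a,b)$, the anchor values $\po(b)=0$, $\pt(a)=0$, $\po_+(a)=(\tfrac1c-1)\pi i$, and the linear relations among $\phi$, $\pt$ and $\po$ to transfer the picture from one map to the others. The only noteworthy difference is on $(a,b)$: the paper computes $\po_+(x)=\pi i\int_x^1\bigl(\tfrac1c-\mu(s)\bigr)\,ds$ directly, which immediately yields $\operatorname{Im}\po_+>0$ and its monotonicity (i.e., exactly the injectivity of the boundary trace that you defer to a later verification), whereas your jump-plus-Schwarz-reflection argument gives only $\operatorname{Re}\po_+=0$ and would still need the bound $\mu<\tfrac1c$ from \eqref{density function} to complete that step.
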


\begin{proof}
  For $x\in(b,1)$, $\po(x)$ is real and negative in view of
(\ref{phi_less_0}). Furthermore, (\ref{e317}) implies that
$\po(x)$ is a monotonically decreasing function in $(b,1)$. Since
$\po(b)=0$, $\po(1)$ is negative. For $x\in(a,b)$, we have from
(\ref{phistar}), (\ref{phi}) and (\ref{g})
$$
	\po_+(x)
	=
	\frac{l}{2}
	-\int_0^1 \log|x-s| \mu(s) ds
	+\pi i \int_x^1 \left(\frac1c -\mu(s) \right)ds.
$$
%%%%%%%%%%%%%%%%%%%%%%%%%%%%%%%%%%%%%%%%%%%%%%%%%%%%%%%%%%%%%%
%%%%%%%%%%%%%%%%%%%% Page 9 %%%%%%%%%%%%%%%%%%%%%%%%%%%%%%%%%%
%%%%%%%%%%%%%%%%%%%%%%%%%%%%%%%%%%%%%%%%%%%%%%%%%%%%%%%%%%%%%%
On account of a statement following (\ref{e315}), the first two terms
on the right-hand side cancel. Hence, we obtain
$$
	\po_+(x)=\pi i \int_x^1 \( \frac1c-\mu(s) \) ds
$$
for $x\in(a,b)$. By (\ref{density function}), $\frac1c > \mu(s)$
for $s\in(0,1)$. Thus, Re$\, \po_+(x)=0$ and Im$\, \po_+(x)>0$ for
$x\in(a,b)$. Furthermore, since $(\text{Im}\, \po_+(x) )'=-\pi (\frac1c -\mu(x))<0$, Im$\, \po_+(x)$ is monotonically decreasing in $(a,b)$.
By a similar argument, it can be shown that for $x\in(1,+\infty)$,
Im$\, \po_+(x)$ is negative and decreasing; also Re$\, \po_+(x)<$
Re$\, \po_+(1)$ and Re$\, \po_+(x)$ is decreasing. Again, by
a similar argument, it can be shown that for $x\in(0,a)$,
Re$\, \po_+(0)=\,$Re$\, \po_+(1)$, Im$\, \po_+(0)=\,$Im$\, \po_+(a)=\pi(\frac1c -1)>0$ and Im$\, \po_+(x)=\pi(\frac1c -1)=\,$Im$\, \po_+(a)$.
Finally, one can show that for $x\in(-\infty,0)$, Im$\,\po_+(x)>\pi(\frac1c-1)>0$, Im$\, \po_+(x)$ is decreasing and Re$\, \po_+(x)$ is
increasing.

Coupling (\ref{phistar}) and (\ref{phitilde}), we have
$$
	\pt(z)=\po(z)\pm \pi i (1-\frac1c),
$$
from which it is easy to verify the image of the upper half-plane
under the mapping $\pt$ shown in Figure 2c.

%%%%%%%%%%%%%%%%%%%%%%%%%%%%%%%%%%%%%%%%%%%%%%%%%%%%%%%%%%%%%%
%%%%%%%%%%%%%%%%%%%% Page 10 %%%%%%%%%%%%%%%%%%%%%%%%%%%%%%%%%%
%%%%%%%%%%%%%%%%%%%%%%%%%%%%%%%%%%%%%%%%%%%%%%%%%%%%%%%%%%%%%%

To verify the graph in Figure 2d, we first show that
for $x\in(-\infty, 0)$, Re$\, \phi_+(x)<\,$Re$\,\phi_+(0)<0$
and Im$\, \phi_+(x)=-\pi$. Then, we show that for $x\in(0,1)$,
Im$\, \phi_+(x)<0$ and Im$\, \phi_+(x)$ is an increasing function.
To deal with Re$\,\phi_+(x)$ in $(0,1)$, we consider three intervals
$(0,a)$, $(a,b)$ and $(b,1)$, separately. For
$x\in(b,1)$, we show that $(\text{Re}\,\phi_+(x))'=(\po(x))'<0$.
Thus, Re$\, \phi_+(x)$ is a decreasing function.
For $x\in(a,b)$, we show that Re$\, \phi_+(x)=0$.
For $x\in(0,a)$, we first show Re$\, \phi_+(x)=\pt(x)$.
By (\ref{phi_less_0}) and (\ref{e319}),
Re$\, \phi_+(x)<0$ and $(\text{Re}\,\phi_+(x))'=(\pt(x))'>0$.
Thus, Re$\, \phi_+(x)$ is an increasing function. This ends
the verification of Figure 2d, and completes the proof of
the proposition.
\end{proof}

To construct our global parametrix, we first define the function
\begin{equation}
    N(z)
=
    \begin{pmatrix}
    \
    \dfrac{ \sqrt{z-a}+\sqrt{z-b} }{ 2(z-a)^{1/4}(z-b)^{1/4}  } &
    -i\dfrac{ \sqrt{z-a}-\sqrt{z-b} }{ 2(z-a)^{1/4}(z-b)^{1/4}  } \ \\\\
    \ i\dfrac{  \sqrt{z-a}-\sqrt{z-b} }{ 2(z-a)^{1/4}(z-b)^{1/4}  } &
    \dfrac{ \sqrt{z-a}+\sqrt{z-b} }{ 2(z-a)^{1/4}(z-b)^{1/4}  }\
    \end{pmatrix}.
\label{N}
\end{equation}
It is readily verified that $N(z)$ is analytic in $\mathbb{C} \setminus [a,b]$ and
\begin{align}
    N_+(x)
=
    N_-(x)
    \begin{pmatrix}
    0  & -1 \\
    1 & 0
    \end{pmatrix},
    \qquad \qquad x\in(a,b).
\label{N_jump}
\end{align}

Next, we introduce the Airy parametrix
\begin{equation}
A(z)
:=
    \left\{
        \begin{array}{ll}
            \begin{pmatrix}
            \text{Ai}(z) & \omega^2 \text{Ai}(\omega^2 z) \\
            i \text{Ai}'(z)  & i \omega \text{Ai}'(\omega^2 z)
            \end{pmatrix}
            &  z\in \mathbb{C}_+;\\\\
            \begin{pmatrix}
            \text{Ai}(z) & -\omega \text{Ai}(\omega z) \\
            i \text{Ai}'(z)  & -i \omega^2 \text{Ai}'(\omega z)
            \end{pmatrix}
            &  z\in \mathbb{C}_- ,
        \end{array}
    \right.
\label{A}
\end{equation}
where $\om=e^{2\pi i/3}$. In view of the well-known identity \cite[(9.2.12)]{Handbook}
\begin{align}
    \Ai(z)+\om\Ai(\om z)+\om^2\Ai(\om^2z)=0
,
\label{Ai_relation}
\end{align}
it is clear that
\begin{equation}
    A_+(z)
=
    A_-(z)
    \begin{pmatrix}
    1 & -1 \\
    0 & 1
    \end{pmatrix},
\qquad x\in\mathbb{R}.
\label{A_jump}
\end{equation}
By (\ref{Ai_relation}), we also have
\begin{eqnarray}\label{A2}
    \A(z)\begin{pmatrix}
        1&0 \\
        \pm1&1
      \end{pmatrix}
=
    \begin{cases}
      \begin{pmatrix}
        -\om\Ai(\om z)&\om^2\Ai(\om^2z) \\
        -i\om^2\Ai'(\om z)&i\om\Ai'(\om^2z)
      \end{pmatrix} &z\in\mathbb{C}_+;\\
      \\
      \left(\begin{matrix}
        -\om^2\Ai(\om^2z)&-\om\Ai(\om z) \\
        -i\om\Ai'(\om^2z)&-i\om^2\Ai'(\om z)
      \end{matrix}\right)&z\in\mathbb{C}_-.
    \end{cases}
\end{eqnarray}
%%%%%%%%%%%%%%%%%%%%%%%%%%%%%%%%%%%%%%%%%%%%%%%%%%%%%%%%%%
%%%%%%%%%%%%%%%%%%% Page 11 %%%%%%%%%%%%%%%%%%%%%%%%%%%%%%%
%%%%%%%%%%%%%%%%%%%%%%%%%%%%%%%%%%%%%%%%%%%%%%%%%%%%%%%%%%%
Recall the asymptotic expansions of the Airy function and its derivative  \cite[9.7(ii)]{Handbook}
\begin{equation}
    \text{Ai}(z)
\sim
    \frac{z^{-1/4}}{ 2\sqrt{\pi} } e^{ -\frac23 z^{3/2} }
    \sum_{s=0}^{\infty} \frac{ (-1)^s u_s }{ (\frac23 z^{3/2})^s },
\ \ \ \
    \text{Ai}'(z)
\sim
    - \frac{ z^{1/4} }{ 2\sqrt{\pi} } e^{- \frac23 z^{3/2}}
    \sum_{s=0 }^{\infty } \frac{ (-1)^s v_s }{ (\frac23 z^{3/2} )^s }
\label{Ai_asy}
\end{equation}
as $z\rightarrow \infty$ in $|\text{arg }z |<\pi $, where $u_s, v_s$ are constants with $u_0=v_0=1$.
%Following \cite{X.S.Wang}, %we set out to derive the asymptotic expansion of $\A(z)$.
From (\ref{A}) and (\ref{Ai_asy}), we obtain
\begin{equation}
    \A(z)
=
    \frac{ z^{ -\sigma_3/4 } }{ 2\sqrt{\pi}  }
    \begin{pmatrix}
    1 & -i \\
    -i & 1
    \end{pmatrix}
    (I+ O(|z|^{-\frac32})) e^{-\frac23 z^{3/2} \sigma_3}\label{A_asy}
\end{equation}
as $z\rightarrow \infty$ in $|\text{arg }z |<\pi $,
\begin{equation}
\A(z)
    \begin{pmatrix}
    1 & 0 \\
    \pm 1 & 1
    \end{pmatrix}
=
     \frac{ z^{ -\sigma_3/4 } }{ 2\sqrt{\pi}  }
    \begin{pmatrix}
    1 & -i \\
    -i & 1
    \end{pmatrix}
    (I+ O(|z|^{-\frac32})) e^{-\frac23 z^{3/2} \sigma_3}  \label{A_asy-other}
\end{equation}
as $z\rightarrow \infty$ with $\text{arg\,}z\in (\pm\pi/3, \pm\pi]$.
Finally,
we introduce the $D$-functions:
% enables us to obtain global asymptotic formulas without any cut in the complex plane.
\begin{equation}
    D(z):=  \dfrac{ e^{Nz} \Gamma(Nz+1/2) }{ \sqrt{2\pi} (Nz)^{Nz}  }
\label{D}
\end{equation}
and
\begin{equation}
    \widetilde{D}(z)
:=
    \dfrac{  \sqrt{2\pi} e^{Nz}(-Nz)^{-Nz} }{ \Gamma(-Nz+1/2) },
\label{Dtilde}
\end{equation}
which were first used in \cite{X.S.Wang} to construct global asymptotic formulas without any cut in the complex plane.
For simplicity, we also introduce the notations
\begin{equation}
    D^*(z):=D(1-z)
,\qquad \qquad
    \widetilde D^*(z) := \widetilde D(1-z)
.
\label{D_star}
\end{equation}
By Euler\rq{}s reflection formula, we have
\begin{equation}
    \widetilde{D}(z)=  2\cos(N\pi z )e^{\pm N\pi i  z} D(z)
\label{D-Dtilde}
\end{equation}
for $z\in \mathbb{C}_{\pm}$.
%The usage of $D(z)$ is to cancel the jump $1+e^{\pm 2N\pi i z}$ on the $\Sigma_{\pm}$.
The reason why we consider $D(z)$ here
is to make sure that
the jump matrix $J_S(z)$ in Lemma 4.1
is asymptotically equal to the identity matrix.
As $n\rightarrow \infty$, by applying Stirling's formula to (\ref{D}) and (\ref{Dtilde}), we obtain
\begin{equation}
    D(z)= 1+O(1/n)
\label{D_asy}
\end{equation}
for $|\text{arg\,}z|<\pi$, and
\begin{equation}
    \widetilde{D}(z)=1+O(1/n)
\label{Dtilde_asy}
\end{equation}
for $|\text{arg}(-z)|<\pi$.

\section{Construction of Parametrix}

\begin{figure}[tbp]
\begin{center}
\includegraphics[scale=1]{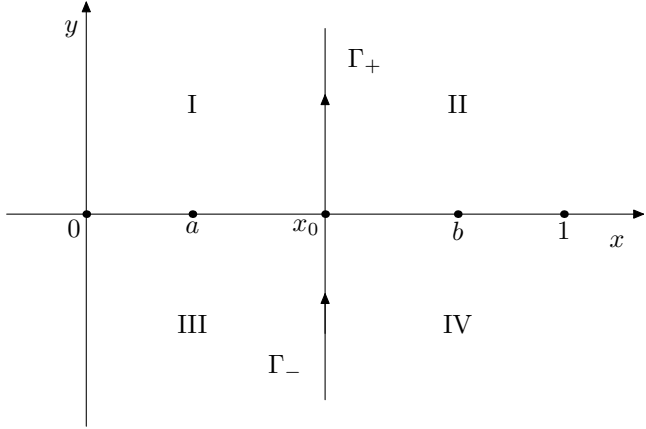}
\caption{The domains I, II, III and IV.}
\label{Figure_Rtilde}
\end{center}
\end{figure}
First, we select an arbitrary fixed point $x_0 \in(a,b)$.
Let $\Gamma$ be the line $\text{Re\,} z= x_0$; see Figure \ref{Figure_Rtilde}.
To facilitate our discussion below, we divide the complex plane into four regions
by using $\Gamma$ and the real line.
%%%%%%%%%%%%%%%%%%%%%%%%%%%%%%%%%%%%%%%%%%%%%%%%%%%%%%%%%
%%%%%%%%%%%%%%%%%%%%%% Page 12 %%%%%%%%%%%%%%%%%%%%%%%%%%%
%%%%%%%%%%%%%%%%%%%%%%%%%%%%%%%%%%%%%%%%%%%%%%%%%%%%%%%%%%
With a similar technique as used in \cite{DaiWong} and \cite{ X.S.Wang},
we construct the parametrix of the RHP for $R$ in these four regions.
Define
\begin{equation}
    \widetilde{R}(z):=(-1)^n \sqrt{\pi} (We^{nl} )^{\sigma_3/2} N(z)
        \begin{pmatrix}
        1 & i \\
        i & 1
        \end{pmatrix}
    \widetilde{f}(z)^{-\sigma_3/4}\sigma_1 \A(\widetilde{f})\sigma_1
    \left[ \frac{ 4\cos^2(N\pi z) D(z)^2  }{ W \prod\limits_{j=0}^{N-1}(
    z-x_{N,j} )^2 }  \right]^{\sigma_3/2}
\label{Rtilde}
\end{equation}
for $z\in $ I and III,
\begin{align}
    \widetilde{R}(z):=
    (-1)^N \sqrt{\pi} (We^{nl} )^{\sigma_3/2} N(z)
        \begin{pmatrix}
        1 & -i \\
        -i & 1
        \end{pmatrix}
    &f^*(z)^{-\sigma_3/4}\sigma_2 \A(f^*)\sigma_2^{-1}
\nonumber
\\
    &\qquad \times
    \left[ \frac{ 4\cos^2(N\pi z) D^*(z)^2  }{ W \prod\limits_{j=0}^{N-1}(
    z-x_{N,j} )^2  }  \right]^{\sigma_3/2}
\label{Rstar}
\end{align}
for $z\in $ II and IV,
where $\widetilde{f}(z)$ and $f^*(z)$ are given by
\begin{equation}
    \widetilde{f}(z):= \left(  -\frac32 n\widetilde{\phi}(z)  \right)^{2/3},  \qquad \qquad
    f^*(z) :=\left(  -\frac32 n{\phi}^*(z)  \right)^{2/3}
\label{f}
\end{equation}
and
$
\sigma_1:=
\begin{pmatrix}
0 & 1\\
1 & 0
\end{pmatrix},
\sigma_2:=
\begin{pmatrix}
0 & 1\\
-1 & 0
\end{pmatrix}.
$
To show that $\widetilde{R}(z)$ has the same behavior as $R(z)$ as $z\rightarrow \infty$,
we only consider the case when $z\in$ I.
The other cases can be handled in a similar manner.
A combination of (\ref{A_asy}) and (\ref{f}) gives
%Note that when $z\rightarrow \infty$,
\begin{equation}
    \sqrt{\pi}
    \begin{pmatrix}
    1 & i \\
    i & 1
    \end{pmatrix}
    \widetilde{f}(z)^{-\sigma_3/4}\sigma_1 \text{A}(\widetilde{f})\sigma_1
\sim
    (-1)^ne^{-n(l/2-g) \sigma_3 }
    e^{N\pi iz \sigma_3}.
\end{equation}
%%%%%%%%%%%%%%%%%%%%%%%%%%%%%%%%%%%%%%%%%%%%%%%%%%%%%%%%%%%
%%%%%%%%%%%%%%%%%% Page 13  %%%%%%%%%%%%%%%%%%%%%%%%%%%%%%%%
%%%%%%%%%%%%%%%%%%%%%%%%%%%%%%%%%%%%%%%%%%%%%%%%%%%%%%%%%%%%
Here, we have  made use of (\ref{phi}) and (\ref{phitilde}).
Thus, we obtain from (\ref{D-Dtilde}) and (\ref{Rtilde})
\begin{equation}
    \widetilde{R}(z)
\sim
   (We^{nl} )^{\sigma_3/2} N(z)e^{-n(l/2-g)\sigma_3}
   \widetilde{D}^{\sigma_3}
   W^{-\sigma_3/2} \prod\limits_{j=0}^{N-1}(z-x_{N,j} )^{-\sigma_3}
.
\end{equation}
Also note that $\widetilde{D}(z) \sim 1$ and
$\prod_{j=0}^{N-1}(z-x_{N,j} ) \sim z^N$
as $z\rightarrow \infty$.
From (\ref{g}) and the asymptotic behavior of $N(z)$, it
follows that
\begin{align}
    \widetilde{R}(z)
\sim
   (We^{nl} )^{\sigma_3/2} N(z)
   (We^{nl} )^{-\sigma_3/2}e^{ng\sigma_3}z^{-N\sigma_3}
=
    (I+O(1/z))z^{(n-N)\sigma_3}
\label{Rtilde_infty}
\end{align}
as $z\rightarrow\infty$.
Define the matrix
\begin{equation}
    S(z):=(W e^{nl})^{-\sigma_3/2}R(z) \widetilde{R}(z)^{-1} (We^{nl })^{\sigma_3/2}
.\label{S}
\end{equation}
It is easy to see that
\begin{equation}
    S(z)=I+O(1/z)
\end{equation}
as $z\rightarrow \infty$.
The jump matrix of $S(z)$ is given by
\begin{equation}
J_{S}(z):=S_-(z)^{-1}S_+(z)=
(W e^{nl})^{-\sigma_3/2} \widetilde{R}_-(z) J_{R}(z)\widetilde{R}_+(z)^{-1} (W e^{nl})^{\sigma_3/2},
\label{S_jump}
\end{equation}
and the contour associated with the matrix $J_S(z)$ is
$\Sigma_S=\Sigma+\Gamma+\mathbb{R}$; see Figures 1 and 3.
\begin{lemma}
$J_S(z)=I+O(\frac1n) $ and $S(z) =I+O\left(\frac1n\right) $ as $n\rightarrow \infty.$
\end{lemma}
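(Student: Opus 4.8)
The plan is to estimate $J_S$ on each portion of $\Sigma_S=\Sigma\cup\Gamma\cup\mathbb{R}$ and then to invoke the standard small-norm theory for Riemann--Hilbert problems to pass from the jump estimate to $S$ itself. Since $J_S=(We^{nl})^{-\sigma_3/2}\widetilde R_- J_R\widetilde R_+^{-1}(We^{nl})^{\sigma_3/2}$ and the conjugating factor $(We^{nl})^{\pm\sigma_3/2}$ is constant, it suffices to control $\widetilde R_- J_R\widetilde R_+^{-1}$ on each arc. The contours split naturally into two types: the original jump contour $\Sigma=(0,1)\cup\Sigma_\pm$, on which $J_R\neq I$ and the parametrix is designed to cancel the jump, and the artificial contours $\Gamma$ and $\mathbb{R}\setminus[0,1]$, on which $J_R=I$ and $J_S$ measures only the mismatch between the two different parametrices (\ref{Rtilde}) and (\ref{Rstar}).

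On $\Sigma$ I would check that $\widetilde R$ reproduces $J_R$ exactly by computing the jump of each factor in (\ref{Rtilde})--(\ref{Rstar}). On $(a,b)$ the off-diagonal jump (\ref{N_jump}) of $N(z)$ combines with the branch jumps of $\widetilde f^{-\sigma_3/4}$ (resp.\ $f^{*-\sigma_3/4}$) and of the bracketed $[\cdots]^{\sigma_3/2}$ factor, while the relations $\widetilde\phi_++\widetilde\phi_-=0$ and $\phi^*_++\phi^*_-=0$ of (\ref{phi_jump_ab}) force the oscillatory exponentials to cancel, so that the exact Airy jump (\ref{A_jump}) delivers precisely the factor $\begin{pmatrix}1&0\\r(x)&1\end{pmatrix}$. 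On $(0,a)$, $(b,1)$ and on the lens boundaries $\Sigma_\pm$ the identities (\ref{A_jump}), (\ref{A2}) together with the reflection relation (\ref{D-Dtilde}) reproduce $r(x)$ and $\widetilde r_\pm(z)$; the residual exponential that survives is governed by $\mathrm{Re}\,\phi<0$ through (\ref{phi_less_0}), (\ref{phistar_re}) and (\ref{phitilde_re}), and is therefore exponentially small. Note that near $a$ and $b$ the Airy argument stays bounded, so here one relies on the exact jump relations rather than on asymptotics, and $J_S=I$ holds exactly (up to exponentially small terms) on all of $\Sigma$.

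On $\Gamma$ and on $\mathbb{R}\setminus[0,1]$ one expands the Airy factors by (\ref{A_asy})/(\ref{A_asy-other}); this is legitimate because $x_0\in(a,b)$ keeps $\widetilde\phi$ and $\phi^*$ bounded away from zero, so that $|\widetilde f|$ and $|f^*|$ are of order $n^{2/3}$ and the error factor $I+O(|\cdot|^{-3/2})$ is $I+O(1/n)$. Replacing $D,\widetilde D,D^*,\widetilde D^*$ by $1+O(1/n)$ via (\ref{D_asy})/(\ref{Dtilde_asy}), I would show that the leading terms of the two neighbouring parametrices coincide. This rests on two facts: both (\ref{Rtilde}) and (\ref{Rstar}) carry the same outer factor $N(z)$, and the exponents differ only by the constant $n(\widetilde\phi-\phi^*)=\pm n\pi i(1-\tfrac1c)=\pm(n-N)\pi i$, read off from (\ref{phistar})--(\ref{phitilde}), which is compensated exactly by the $(-1)^n$ versus $(-1)^N$ prefactors, while the $z$-dependent phase $e^{\pm N\pi iz}$ is matched through (\ref{D-Dtilde}). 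Once these leading orders cancel, what remains is $J_S=I+O(1/n)$ on $\Gamma$ and $\mathbb{R}\setminus[0,1]$.

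The \emph{main obstacle} is the bookkeeping in the last step: verifying that the two Airy-type parametrices glue together across $\Gamma$ (and across the real axis outside $[0,1]$) to leading order requires tracking simultaneously the branch of $\widetilde f^{1/2}$ and $f^{*1/2}$, the correct asymptotic sector for the Airy function --- dictated by the mapping properties in Figure \ref{Figure_phi} --- and the oscillatory factor $e^{\pm N\pi iz}\cos(N\pi z)$ absorbed by $\widetilde D$; a sign error in any of these destroys the cancellation. A secondary difficulty is uniformity up to the hard edges $z=0,1$, where the nodes accumulate and the Stirling estimates $D=1+O(1/n)$, $\widetilde D=1+O(1/n)$ hold only in a cut plane; this is exactly why four different $D$-functions are introduced, and one must confirm that the region assignments keep every Gamma-function argument inside its domain of validity. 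With the uniform bound $\|J_S-I\|_{L^1\cap L^2\cap L^\infty(\Sigma_S)}=O(1/n)$ established, the estimate $S=I+O(1/n)$ follows from the Neumann-series solution of the associated singular integral equation.
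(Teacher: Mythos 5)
Your overall architecture --- estimating $J_S$ arc by arc on $\Sigma_S$, treating $\Sigma$ as the contour where the parametrix is built to cancel $J_R$ and $\Gamma$, $\mathbb{R}\setminus[0,1]$ as the contours where local constructions must match, then invoking small-norm theory --- is the same as the paper's, and your matching computation across $\Gamma$ (the constant $n(\widetilde\phi-\phi^*)=\pm(n-N)\pi i$ absorbed by the prefactors $(-1)^n$ and $(-1)^N$, the ratio $D^*/D=1+O(1/n)$, and $\mathrm{Re}\,\widetilde\phi>0$ inside the lens) is exactly what the paper does. However, one of your intermediate claims is genuinely wrong: the assertion that on $\Sigma$ the parametrix reproduces $J_R$ exactly, so that ``$J_S=I$ holds exactly (up to exponentially small terms) on all of $\Sigma$.'' This cannot hold, even after repairing signs and branches. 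The bracket in (\ref{Rtilde}) carries the factor $D(z)^2$, i.e.\ it is $\bigl[4\cos^2(N\pi z)D(z)^2/(W\prod_j(z-x_{N,j})^2)\bigr]^{\sigma_3/2}$, whereas the actual jump entry $r(x)$ in (\ref{r}) has no such factor. Consequently, on $[0,x_0]$ the exact Airy identity (\ref{A_jump}) cancels only the unimodular part of the triangular factor and leaves behind the matrix $\begin{pmatrix}1 & 0\\ D^{-2}-1 & 1\end{pmatrix}$ conjugated by a bounded matrix --- this is precisely the paper's display (\ref{S_0-x_0}) --- and $D^{-2}-1=O(1/n)$ by Stirling, an algebraically small but not exponentially small error. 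This is structural, not an accident of bookkeeping: the discrete weight enters $r(x)$ through $\cos^2(N\pi x)\prod_j(x-x_{N,j})^{-2}$, which no Airy-type parametrix can match identically; the function $D$ is introduced exactly to absorb this node structure at the cost of an $O(1/n)$ discrepancy, and that discrepancy on $(0,1)$ is the reason the lemma claims only $O(1/n)$.

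Since the lemma's conclusion is itself only $O(1/n)$, this error is not fatal: once you actually perform the computation you outline, the surviving $D^{-2}-1$ term appears and the required bound still follows. But the mechanism you state for $(0,1)$ is incorrect as written, and it is in tension with your own (correct) remark that the $D$-functions are essential --- if the jump on $\Sigma$ were matched exactly, $D$ would contribute no error there at all. Your observation that near $a$ and $b$ one must use exact jump relations rather than Airy asymptotics is right and is what the paper does. A minor secondary point: on $\mathbb{R}\setminus[0,1]$ the jump of $J_S$ is not a mismatch between the two parametrices (\ref{Rtilde}) and (\ref{Rstar}) (regions I and III share one formula, II and IV the other); it is the branch discontinuity of a single parametrix across the real axis, which cancels because $D_-/D_+=e^{2N\pi iz}$ exactly compensates $e^{n(\widetilde\phi_+-\widetilde\phi_-)}=e^{-2N\pi iz}$ --- a consequence of (\ref{D-Dtilde}), as you suggest.
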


\begin{proof}

For $z\in\Gamma$ and $\pm\,$Im\,$z\in(0,\delta)$,
we have $J_R(z)=I$.
This together with
(\ref{Rtilde}), (\ref{Rstar}) and (\ref{S_jump})
gives
\begin{align}
    J_S(z)
&=
    (-1)^N  N(z)
    \left[
        \sqrt{\pi}
        \begin{pmatrix}
        1 & -i \\
        -i & 1
        \end{pmatrix}
        f^*(z)^{-\sigma_3/4}\sigma_2 A(f^*)\sigma_2^{-1}
    \right]
    (D^*(z)/D(z))^{\sigma_3}
\nonumber
\\
&\quad \times
    \left[
        \sqrt{\pi}
        \begin{pmatrix}
        1 & i \\
        i & 1
        \end{pmatrix}
        \widetilde{f}(z)^{-\sigma_3/4}\sigma_1 A(\widetilde{f})\sigma_1
    \right]^{-1} N(z)^{-1} (-1)^n\
.
\label{S_x0}
\end{align}
On account of (\ref{f}), we have from (\ref{A_asy-other})
$$
    \sqrt{\pi}
    \begin{pmatrix} 1 & -i \\ -i & 1 \end{pmatrix}
    f^*(z)^{-\sigma_3/4}\sigma_2 A(f^*)\sigma_2^{-1}
=
    (I+O(1/n))
    e^{-n{\phi}^* \sigma_3}
    \begin{pmatrix}1 & \pm 1 \\0 & 1\end{pmatrix}
$$
and
$$
    \left[
        \sqrt{\pi}
        \begin{pmatrix}1 & i \\i & 1\end{pmatrix}
        \widetilde{f}(z)^{-\sigma_3/4}\sigma_1 A(\widetilde{f})\sigma_1
    \right]^{-1}
=
    \begin{pmatrix}1 & \mp 1\\0 & 1\end{pmatrix}
    e^{n\widetilde{\phi}\sigma_3}(I+O(1/n))
$$
as $n\rightarrow \infty$.
Applying the above two equations to (\ref{S_x0}) gives
\begin{align*}
J_S(z)
&=
     N(z)
    \begin{pmatrix}
    \frac{D^*(z)}{D(z)}  &  \left(\mp\frac{D^*(z)}{D(z)}\pm\frac{D(z)}{D^*(z)}  \right)e^{-2n\widetilde{\phi} }\
     \\\\
    0 &  \frac{D(z)}{D^*(z)}
    \end{pmatrix}
    N(z)^{-1}(I+O(1/n))
\\
&=
    I+O(1/n)
.
\end{align*}
%%%%%%%%%%%%%%%%%%%%%%%%%%%%%%%%%%%%%%%%%%%%%%%%%%%%%%%%%%%%%%
%%%%%%%%%%%%%%%%%%%% Page 14 %%%%%%%%%%%%%%%%%%%%%%%%%%%%%%%%%%
%%%%%%%%%%%%%%%%%%%%%%%%%%%%%%%%%%%%%%%%%%%%%%%%%%%%%%%%%%%%%%%
Here, we have used the fact that
$D(z)=1+O(1/n)$, $D^*(z)=1+O(1/n)$
and Re$\, \widetilde{\phi}> 0$; see Figure \ref{Figure_phi}c.
(A corresponding drawing can be given for the image of the lower
half-plane under the mapping $\pt$.)

For $z\in \Gamma$ and $\pm\,$Im\,$z\notin(0,\delta)$, we have from (\ref{A_asy}) and (\ref{f})
$$
    \sqrt{\pi}
    \begin{pmatrix}
    1 & -i \\
    -i & 1
    \end{pmatrix}
    f^*(z)^{-\sigma_3/4}\sigma_2 A(f^*)\sigma_2^{-1}
=
    e^{-n{\phi}^*\sigma_3}(I+O(1/n))
$$
and
$$
    \left[
        \sqrt{\pi}
        \begin{pmatrix}
        1 & i \\
        i & 1
        \end{pmatrix}
        \widetilde{f}(z)^{-\sigma_3/4}\sigma_1 A(\widetilde{f})\sigma_1
    \right]^{-1}
=
    e^{n\widetilde{\phi}\,\sigma_3}(I+O(1/n)).
$$
Applying the last two equations to (\ref{S_x0}) yields
\begin{align*}
    J_S(z)
&=
    (-1)^{N-n}  N(z)
    e^{-n{\phi}^* \sigma_3}
    \left( \frac{D^*(z)}{D(z)} \right)^{\sigma_3}
    e^{n\widetilde{\phi}\sigma_3}
    N(z)^{-1}(I+O(1/n))
\\
&=
    I+O(1/n)
.
\end{align*}

Let us now consider $z$ in the left half-plane of $\Gamma$.
For $z=x\in (-\infty,0)$, we have $J_R(x)=I$. Coupling (\ref{Rtilde}) and (\ref{S_jump}) gives
\begin{align}
    J_S(z)
&=
    (-1)^n N(z)
    \left[
        \begin{pmatrix}
        1 & i \\
        i & 1
        \end{pmatrix}
        \widetilde{f}_-(z)^{-\sigma_3/4}\sigma_1 A(\widetilde{f}_-)\sigma_1
    \right]
    \left(  D_-(z)/D_+(z)  \right)^{\sigma_3}
\nonumber
\\
&\quad\times
    \left[
        \begin{pmatrix}
        1 & i \\
        i & 1
        \end{pmatrix}
        \widetilde{f}_+(z)^{-\sigma_3/4}\sigma_1 A(\widetilde{f}_+)\sigma_1
    \right]^{-1}
    N(z)^{-1}
    (-1)^n
.
\label{S_0}
\end{align}
Note that arg$\, \widetilde f_{\pm}(x)\in (-\pi,\pi)$ in this
case; see Figure 2c. Hence,
we obtain from (\ref{A_asy}) and (\ref{f})
$$
    \begin{pmatrix}
    1 & i \\
    i & 1
    \end{pmatrix}
    \widetilde{f}_{\pm}(z)^{-\sigma_3/4}\sigma_1 A(\widetilde{f}_{\pm})\sigma_1
=
    \frac{1  }{ \sqrt{ \pi} }e^{-n\widetilde{\phi}_{\pm} \sigma_3 }(I+O(1/n))
.
$$
Substituting the above  equation in (\ref{S_0}) yields
\begin{align*}
J_S(z)
&=
N(z) e^{-n\widetilde{\phi}_- \sigma_3}  e^{2N \pi iz \sigma_3 } e^{ n\widetilde{\phi}_+ \sigma_3} N(z)^{-1}
(I+O(1/n))
\\
&= I+O(1/n).
\end{align*}
Here, we have used the facts
$D_-/D_+= e^{2N \pi i   z}$ and $e^{ n(\widetilde{\phi}_+-\widetilde{\phi}_-) } = e^{-2N \pi i  z }$.

For $z=x\in [0,x_0 ]$,
we have from (\ref{R_real}) and (\ref{r})
$$
J_R(x)
=
    \begin{pmatrix}
    1 & 0 \  \\\\
    \dfrac{4\cos^2(N\pi x )}{ W
    \prod\limits_{j=0}^{N-1}(x-x_{N,j})^2 } & 1 \
    \end{pmatrix}.
$$
Note that $\widetilde{f}(z) \in \mathbb{C}_{\mp}$ when $z\in \mathbb{C}_{\pm}$.
This together with (\ref{Rtilde}) and (\ref{S_jump}) gives
\begin{align}
J_S(x)
&=
   \left[ N(x)  \begin{pmatrix} 1 & i \\ i & 1 \end{pmatrix} \widetilde{f}(x)^{-\sigma_3/4} \sigma_1 A(\widetilde{f}_-) \sigma_1 \right]
       \begin{pmatrix}
       1 & 0 \\
       D^{-2} & 1
       \end{pmatrix}
\nonumber
\\
&\quad \times
   \left[ N(x)  \begin{pmatrix} 1 & i \\ i & 1 \end{pmatrix} \widetilde{f}(x)^{-\sigma_3/4} \sigma_1 A(\widetilde{f}_+) \sigma_1 \right]^{-1}
   %\ \ (\text{by } (), \text{ when } z\in \mathbb{C}_{\pm}, \text{ we have } \widetilde{f}(z) \in \mathbb{C}_{\mp})
\end{align}
%%%%%%%%%%%%%%%%%%%%%%%%%%%%%%%%%%%%%%%%%%%%%%%%%%%%%%
%%%%%%%%%%%% Page 15 %%%%%%%%%%%%%%%%%%%%%%%%%%%%%%%%%%
%%%%%%%%%%%%%%%%%%%%%%%%%%%%%%%%%%%%%%%%%%%%%%%%%%%%%%%
From (\ref{A_jump}), it follows that
\begin{align}
J_S(x)
&=
   \left[ N(x)  \begin{pmatrix} 1 & i \\ i & 1 \end{pmatrix} \widetilde{f}(x)^{-\sigma_3/4} \sigma_1 A(\widetilde{f}_+) \sigma_1 \right]
   \left[ \sigma_1 \begin{pmatrix} 1 & -1 \\ 0 & 1  \end{pmatrix} \sigma_1  \right]
       \begin{pmatrix}
       1 & 0 \\
       D^{-2} & 1
       \end{pmatrix}
\nonumber
\\
&\quad \times
   \left[ N(x)  \begin{pmatrix} 1 & i \\ i & 1 \end{pmatrix} \widetilde{f}(x)^{-\sigma_3/4} \sigma_1 A(\widetilde{f}_+) \sigma_1 \right]^{-1}
        \nonumber
        \\
&=
    \left[ N(x)  \begin{pmatrix} 1 & i \\ i & 1 \end{pmatrix} \widetilde{f}(x)^{-\sigma_3/4} \sigma_1 A(\widetilde{f}_+) \sigma_1 \right]
    \begin{pmatrix}
    1 & 0 \\
    D^{-2}-1 & 1
    \end{pmatrix}
\nonumber
\\
&\quad\times
    \left[ N(x)  \begin{pmatrix} 1 & i \\ i & 1 \end{pmatrix} \widetilde{f}(x)^{-\sigma_3/4} \sigma_1 A(\widetilde{f}_+) \sigma_1 \right]^{-1}
.
\label{S_0-x_0}
\end{align}
Note that $N(z)$ and $\widetilde{f}(z)^{-1/4}$ are
both discontinues on the interval $(a,x_0]$.
From Figure \ref{Figure_phi}c and (\ref{f}), we observe
that arg$\, \widetilde{\phi}_{\pm}(x)=\mp \pi/2$
and arg$\, \widetilde{f}_{\pm}(x)=\mp \pi$ for $x\in(a,x_0]$.
Thus, we have
$\widetilde{f}_+(x)^{ -\sigma_3/4 }= \widetilde{f}_-(x)^{-\sigma_3/4 } e^{ i \pi \sigma_3/2 }$.
By (\ref{N_jump}), it is readily seen that
$$
    N(z)
    \begin{pmatrix}
    1 & i \\
    i & 1
    \end{pmatrix}
    \widetilde{f}(z)^{-\sigma_3/4}
$$
has no jump on the interval $(a,x_0]$. Applying (\ref{D_asy}) to (\ref{S_0-x_0}) gives $J_S(x)=I+O(1/n)$.

For $z\in \Sigma_{\pm}$ and Re$\, z<x_0$, the jump matrix $J_R(z)$ is
$$
J_R(z)
=   \begin{pmatrix}
    1 & 0 \ \\
    \\
    \dfrac{\mp 2 e^{ \pm i\pi Nz }
    \cos(N\pi z) }{  W \prod\limits_{j=0}^{N-1}(
    z-x_{N,j} )^2  } & 1
    \end{pmatrix};
$$
see (2.12).
This together with (\ref{Rtilde})  gives
\begin{align}
J_S(z)
&=
    N(z)
    \left[
        \begin{pmatrix}
        1 & i \\
        i & 1
        \end{pmatrix}
        \widetilde{f}(z)^{-\sigma_3/4}\sigma_1 A(\widetilde{f})\sigma_1
    \right ]
    \begin{pmatrix}
    1 & 0 \ \ \\
    \\
    \dfrac{\mp e^{\pm i\pi Nz} }{ 2\cos(N\pi z) D^2 } & 1
    \end{pmatrix}
\nonumber
\\
&\quad  \times
    \left[
        \begin{pmatrix}1 & i \\i & 1\end{pmatrix}
        \widetilde{f}(z)^{-\sigma_3/4}\sigma_1 A(\widetilde{f})\sigma_1
    \right]^{-1}
    N^{-1}(z).
\label{S_l-a}
\end{align}
From (\ref{A_asy}) and(\ref{f}), we then obtain
$$
    \begin{pmatrix}
    1 & i \\
    i & 1
    \end{pmatrix}
    \widetilde{f}(z)^{-\sigma_3/4}\sigma_1 A(\widetilde{f})\sigma_1
=
    \frac{e^{-n\widetilde{\phi} \sigma_3}}{ \sqrt{\pi} }(I+O(1/n)).
$$
%%%%%%%%%%%%%%%%%%%%%%%%%%%%%%%%%%%%%%%%%%%%%%%%%%%%%%%%
%%%%%%%%%%%%%%%%%%% Page 16 %%%%%%%%%%%%%%%%%%%%%%%%%%%%%
%%%%%%%%%%%%%%%%%%%%%%%%%%%%%%%%%%%%%%%%%%%%%%%%%%%%%%%%%%
Applying the above formula to (\ref{S_l-a}) yields
\begin{align*}
    J_S(z)
&=
    N(z)
    \begin{pmatrix}
    1 & 0 \ \ \\\\
    \dfrac{ \mp e^{ 2n\widetilde{\phi} } e^{\pm 2i\pi Nz } }
    { 2\cos(N\pi z) e^{\pm i\pi N z}D(z)^2 }  & 1
    \end{pmatrix}
    N(z)^{-1}
    (I+O(1/n))
\\
&=
    N(z)
    \begin{pmatrix}
    1 & 0 \ \ \\\\
    \dfrac{ \mp e^{2n\phi} }{ \widetilde{D}(z)D(z) }  & 1
    \end{pmatrix}
    N(z)^{-1}
    (I+O(1/n))
.
\end{align*}
One can show that
$\text{Re}\,\phi <0$ in this case;
see Figure \ref{Figure_phi}d. Thus, we have $J_S(z)=I+O(1/n)$
as $n\rightarrow \infty$.

In a similar manner, we can prove that for $z$ in the right half-plane of $\Gamma$, the corresponding jump matrix $J_S(z)$ on $\Sigma$
and on the real line
tends to the identity matrix.
Hence, we have shown that
$J_S(z)=I+O(1/n)$ as $n\rightarrow\infty$
on the contour $\Sigma_S$.
By the main result in \cite{QiuWong}, we conclude that $S(z)=I+O(\frac1n)$ as $n\rightarrow \infty$.

\end{proof}

\section{Main Results}

\begin{theorem}
Let $l$, $D(z)$ and $\widetilde{f}(z)$ be defined  as in (\ref{phi}), (\ref{D}) and (\ref{f}), respectively.
We have
%respectively, the monic polynomial $\pi_{N,n}(z)$ has the representation
\begin{align}
    \pi_{N,n}(Nz-1/2)
&=
    (-N)^n\sqrt{\pi} e^{nl/2}
\nonumber
\\
&\ \times
    \Bigg\{
        \left[
        \sin(N\pi z) \text{\upshape Ai}(\widetilde{f}(z))+\cos(N\pi z)D(z) \text{\upshape Bi}(\widetilde{f}(z))
        \right]
        \widetilde{A}(z,n)
\nonumber
\\
&\qquad\ +
        \left[
        \sin(N\pi z) \text{\upshape Ai}'(\widetilde{f}(z))+\cos(N\pi z)D(z) \text{\upshape Bi}'(\widetilde{f}(z))
        \right]
        \widetilde{B}(z,n)
    \Bigg\}
\label{pi_left}
\end{align}
for $\, z \in$ {\upshape I and III},
where
$$
    \widetilde{A}(z,n)=\frac{(z-b)^{1/4} }{ (z-a)^{1/4}   }\widetilde{f}(z)^{1/4}\left[1+O(1/n)\right],
    \qquad
    \widetilde{B}(z,n)=\frac{ (z-a)^{1/4} }{ (z-b)^{1/4}  }\widetilde{f}(z)^{-1/4}[1+O(1/n)].
$$
Similarly, with $D^*(z)$ and $f^*(z)$ defined in (\ref{D_star}) and (\ref{f}),
\begin{align}
    \pi_{N,n}(Nz-1/2)
&=
    (-1)^N N^n\sqrt{\pi} e^{nl/2}
\nonumber
\\
&\ \times
    \Bigg\{
        \Big[
        \cos(N\pi z)D^*(z) \text{\upshape Bi}(f^*(z))-\sin(N\pi z) \text{\upshape Ai}(f^*(z))
        \Big]
        {A}^*(z,n)
\nonumber
\\
&\qquad  +
        \Big[
        \cos(N\pi z)D^*(z) \text{\upshape Bi}'(f^*(z))-\sin(N\pi z) \text{\upshape Ai}'(f^*(z))
        \Big]
        {B}^*(z,n)
    \Bigg\}
\label{pi_right}
\end{align}
for $\, z \in$ {\upshape II and IV},
where
$$
    {A}^*(z,n)=\frac{(z-a)^{1/4} }{ (z-b)^{1/4}   }f^*(z)^{1/4}[1+O(1/n)],
    \qquad
    {B}^*(z,n)=\frac{ (z-b)^{1/4} }{ (z-a)^{1/4}  }f^*(z)^{-1/4}[1+O(1/n)].
$$
\end{theorem}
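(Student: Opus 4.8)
The plan is to unwind the transformations $Y\mapsto H\mapsto R$ and to read off $\pi_{N,n}$ from the first column of the explicit parametrix $\widetilde R(z)$, controlling the error by Lemma 4.1. Inverting the definition (\ref{S}) of $S(z)$ gives
$$R(z)=(We^{nl})^{\sigma_3/2}\,S(z)\,(We^{nl})^{-\sigma_3/2}\,\widetilde R(z),$$
and Lemma 4.1 provides $S(z)=I+O(1/n)$. On the other hand, for $z$ in region I lying outside $\Omega_\pm$ we have $R(z)=H(z)$, so (\ref{H}) together with Theorem 2.1 yields
$$\pi_{N,n}(Nz-1/2)=N^{n}\prod_{j=0}^{N-1}(z-x_{N,j})\,R_{11}(z).$$
(Inside $\Omega_\pm$ one must first restore $H$ from $R$ via the triangular factor in (\ref{R_H-a}); the same final formula results.) Thus the theorem reduces to an explicit evaluation of the first column of $\widetilde R(z)$ together with a check that the error from $S-I$ is relatively $O(1/n)$.

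First I would expand $R_{11}$ from the matrix identity above, obtaining $R_{11}=S_{11}[\widetilde R]_{11}+We^{nl}S_{12}[\widetilde R]_{21}$. Since $S_{11}=1+O(1/n)$ and $S_{12}=O(1/n)$, the remaining task in the error analysis is to show that $We^{nl}[\widetilde R]_{21}$ is dominated by $[\widetilde R]_{11}$, so that $R_{11}=[\widetilde R]_{11}(1+O(1/n))$. This dominance follows from the sign of $\mathrm{Re}\,\widetilde\phi$ recorded in Proposition 3.3 and displayed in Figure \ref{Figure_phi}, which fixes which Airy solution is exponentially large in region I.

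Next I would compute $[\widetilde R]_{11}$ by multiplying out the factors in (\ref{Rtilde}). Because the rightmost bracket is diagonal, the first column of $\widetilde R$ equals $\dfrac{2\cos(N\pi z)D(z)}{\sqrt W\,\prod_{j=0}^{N-1}(z-x_{N,j})}$ times the first column of
$$(-1)^n\sqrt\pi\,(We^{nl})^{\sigma_3/2}\,N(z)\begin{pmatrix}1&i\\i&1\end{pmatrix}\widetilde f(z)^{-\sigma_3/4}\sigma_1 A(\widetilde f)\sigma_1.$$
Into this I would substitute the explicit $N(z)$ from (\ref{N}) and the Airy parametrix $A(\widetilde f)$ from (\ref{A}), keeping in mind that $\widetilde f(z)\in\mathbb{C}_\mp$ when $z\in\mathbb{C}_\pm$. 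The resulting combinations $\Ai(\om^{\pm1}\widetilde f)$ would then be rewritten in terms of $\Ai(\widetilde f)$ and $\Bi(\widetilde f)$ by the standard connection formulas together with (\ref{Ai_relation}). Finally I would use the reflection identity (\ref{D-Dtilde}) to replace $2\cos(N\pi z)e^{\pm N\pi iz}D$ by $\widetilde D=1+O(1/n)$; the factors $\sqrt W$ and $\prod_{j=0}^{N-1}(z-x_{N,j})$ then cancel against the prefactors $(We^{nl})^{\sigma_3/2}$ and $N^{n}\prod_{j=0}^{N-1}(z-x_{N,j})$, while the entries of $N(z)$ supply the amplitudes $(z-b)^{1/4}/(z-a)^{1/4}$ and $(z-a)^{1/4}/(z-b)^{1/4}$. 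Collecting terms produces exactly the coefficients $\widetilde A(z,n)$ and $\widetilde B(z,n)$ of (\ref{pi_left}).

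Regions II and IV are treated identically, starting from (\ref{Rstar}) with $\phi^*$, $f^*$, $D^*$ and $\sigma_2$ in place of $\widetilde\phi$, $\widetilde f$, $D$ and $\sigma_1$; the sign differences there account for the $-\sin(N\pi z)\Ai(f^*)$ appearing in (\ref{pi_right}). I expect the main obstacle to be the bookkeeping of branches and signs: one must track $\arg\widetilde f(z)$ across the upper and lower half-planes and across $\Gamma$, the fourth-root branches hidden in $N(z)$ and $\widetilde f^{-\sigma_3/4}$, and the $\pm$ in (\ref{D-Dtilde}), so that a single real-analytic formula emerges consistently throughout I and III (respectively II and IV). Once the dominance of $[\widetilde R]_{11}$ is established, the propagation of the $O(1/n)$ error into $\widetilde A(z,n)$ and $\widetilde B(z,n)$ is routine.
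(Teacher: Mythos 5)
Your overall route is the paper's own: invert (\ref{S}), invoke Lemma 4.1, evaluate the parametrix (\ref{Rtilde}) entrywise, and convert $\Ai(\om^{\pm1}\widetilde f)$ into $\Ai(\widetilde f)$, $\Bi(\widetilde f)$ via the connection formula (\ref{Bi}). However, two steps of your plan would fail as written. First, the error analysis: you propose to show $We^{nl}[\widetilde R]_{21}$ is \emph{dominated} by $[\widetilde R]_{11}$ and conclude $R_{11}=[\widetilde R]_{11}(1+O(1/n))$. No such dominance holds: multiplying out (\ref{Rtilde}) shows that $[\widetilde R]_{11}$ and $We^{nl}[\widetilde R]_{21}$ are linear combinations of the \emph{same} two functions $\Ai(\om\widetilde f)$ and $\Ai'(\om\widetilde f)$ with algebraic coefficients of comparable size, so the mechanism you invoke (``which Airy solution is exponentially large'') is not operative, and the two entries are generically of the same order. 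Nor is a single uniform relative error on the whole expression the right statement near the oscillatory region, where the approximant has (near-)zeros. The paper sidesteps this entirely: it keeps the $S_{12}$ term exactly and folds it into the coefficients, setting $\widetilde a=\frac{(z-b)^{1/4}}{(z-a)^{1/4}}\widetilde f^{1/4}(S_{11}-iS_{12})$ and $\widetilde b=\frac{(z-a)^{1/4}}{(z-b)^{1/4}}\widetilde f^{-1/4}(S_{11}+iS_{12})$; since $S_{11}\mp iS_{12}=1+O(1/n)$, this is precisely why the theorem's $1+O(1/n)$ factors sit separately on $\widetilde A(z,n)$ and $\widetilde B(z,n)$ rather than on $\pi_{N,n}$ as a whole.

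Second, and more seriously, your parenthetical ``(inside $\Omega_\pm$ \dots the same final formula results)'' dismisses what is the heart of the proof. Outside $\Omega_+$ your evaluation of $[\widetilde R]_{11}$ produces the term $i\cos(N\pi z)D(z)\Ai(\widetilde f)$ where (\ref{pi_left}) has $\sin(N\pi z)\Ai(\widetilde f)$; the discrepancy $[\,i\cos(N\pi z)D(z)-\sin(N\pi z)\,]\Ai(\widetilde f)\approx ie^{iN\pi z}\Ai(\widetilde f)$ is exponentially negligible there, so in that region either form is acceptable. But inside $\Omega_\pm$ — in particular near the interval $(a,x_0)$, where $\widetilde f$ is negative and $\Ai(\widetilde f)$, $\Bi(\widetilde f)$ are of equal size — this discrepancy is of the same order as the retained terms, and the $\sin(N\pi z)\Ai(\widetilde f)$ term must be produced honestly from the second column: there $H_{11}=R_{11}-\frac{\mp ie^{\pm N\pi iz}\cos(N\pi z)}{N\pi\prod_{j=0}^{N-1}(z-x_{N,j})^2}\,R_{12}$, and one needs the $R_{12}$ contribution, the identity $\mp ie^{\pm iN\pi z}=\sin(N\pi z)\mp i\cos(N\pi z)$, the formula (\ref{Bi}), and a justification for discarding $(D(z)^{-2}-1)\Ai(\widetilde f)$ and $(D(z)^{-2}-1)\Ai'(\widetilde f)$ — which requires the observation that near $z=0$, where $D(z)-1$ is \emph{not} $O(1/n)$, the functions $\Ai(\widetilde f)$, $\Ai'(\widetilde f)$ are exponentially small. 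This is exactly the computation carried out in the paper between (\ref{H_1p}) and (\ref{H_1}), and it cannot be bypassed; the paper's logic runs in the opposite direction from yours (derive the formula inside $\Omega_\pm$, then extend outside by the exponential-smallness argument), which is the direction in which the extension is actually easy. A minor further point: (\ref{D-Dtilde}) plays no role in this proof — the final formula retains $\cos(N\pi z)D(z)$, and replacing $2\cos(N\pi z)D(z)$ by $e^{\mp N\pi iz}\widetilde D(z)$ would introduce a spurious exponential factor.
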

%%%%%%%%%%%%%%%%%%%%%%%%%%%%%%%%%%%%%%%%%%%%%%%%%%%%%%%%%%%%%%%%%%%%%%%%%
%%%%%%%%%%%%%%%%%%%%%%%%%%%%%%%% Page 17 %%%%%%%%%%%%%%%%%%%%%%%%%%%%%%%%%
%%%%%%%%%%%%%%%%%%%%%%%%%%%%%%%%%%%%%%%%%%%%%%%%%%%%%%%%%%%%%%%%%%%%%%%%%%

\begin{proof}
From the definition of $S(z)$ in (\ref{S}), we have
$$
    R(z)=(W e^{nl})^{\sigma_3/2} S(z) (We^{nl })^{-\sigma_3/2}\widetilde{R}(z).
$$
For any matrix $X$, we denote its $(i,j)$ element  by $X_{ij}$.
The above formula then gives
$$
    R_{11}(z)=S_{11}(z)\widetilde{R}_{11}(z)
    +S_{12}(z)\widetilde{R}_{21}(z)W e^{nl}
$$
and
$$
    R_{12}(z)=S_{11}(z)\widetilde{R}_{12}(z)
    +S_{12}(z)\widetilde{R}_{22}(z)W e^{nl}
.
$$

First, let us
restrict $z$ to the region I indicated in Figure \ref{Figure_Rtilde}.
A combination of (\ref{N}) , (\ref{A}) and (\ref{Rtilde}) gives
$$
    R_{11}(z)
=
    (-1)^n\sqrt{\pi}e^{nl/2}
    \left[
        -i\omega\text{Ai}(\omega \widetilde{f})
        \widetilde{a}(z,n)
        -i\omega^2 \text{Ai}'(\omega \widetilde{f})
        \widetilde{b}(z,n)
    \right]
    \times
        \frac{ 2\cos(N\pi z) D(z)  }{ \prod_{j=0}^{N-1}(
        z-x_{N,j} ) }
   % -i\frac{ (z-a)^{1/4} }{ (z-b)^{1/4}  }\widetilde{f}^{-1/4}\omega^2 \text{Ai}'(\omega \widetilde{f})
%-i\frac{(z-b)^{1/4} }{ (z-a)^{1/4}   }\widetilde{f}^{1/4}\omega \text{Ai}(\omega \widetilde{f})
$$
and
$$
    R_{12}(z)
=
    (-1)^n\sqrt{\pi}e^{nl/2}
    \left[
        i\text{Ai}(\widetilde{f})
        \widetilde{a}(z,n)
        +i\text{Ai}'(\widetilde{f})
        \widetilde{b}(z,n)
    \right]
    \times
        \frac{ W \prod_{j=0}^{N-1}(
        z-x_{N,j} ) }{ 2\cos(N\pi z) D(z)  }
,
$$
where
$$
    \widetilde{a}(z,n)=\frac{(z-b)^{1/4} }{ (z-a)^{1/4}   }\widetilde{f}(z)^{1/4}(S_{11}(z)-iS_{12}(z))
$$
and
$$
     \widetilde{b}(z,n)=\frac{ (z-a)^{1/4} }{ (z-b)^{1/4}  }\widetilde{f}(z)^{-1/4}(S_{11}(z)+iS_{12}(z)).
$$

From (\ref{R_H-a}) and (\ref{R_H}), we know that
$H_{11}(z)$ has different expressions in different parts of region I.
Let us first consider the regions I$\,\cap\, \Omega_+ $ and III$\,\cap\, \Omega_-$.
%For Re$\, z\in(0,x_0)$ and Im$\, z \in (0,\delta)$,
For $z\in$ I$\,\,\cap\,\Omega_+ $,
we have from (\ref{R_H-a})
\begin{align}
    H_{11}(z)
&=
    R_{11}(z)
    -
    \dfrac{\mp ie^{ \pm N \pi iz  }  \cos(N\pi z) }{   N\pi   \prod_{j=0}^{N-1}( z-x_{N,j} )^2  }R_{12}(z)
\nonumber
\\
&=
    (-1)^n\sqrt{\pi} e^{nl/2}\prod_{j=0}^{N-1}( z-x_{N,j} )^{-1}
\nonumber
\\
&\quad\times
    \Bigg\{
        \cos(N\pi z)D(z)
        \left[
            -2i\omega\text{Ai}(\omega\widetilde{f})
            \,\widetilde{a}(z,n)
            -2i\omega^2
            \text{Ai}'(\omega\widetilde{f})
            \,\widetilde{b}(z,n)
        \right]
\nonumber
\\
&\qquad \quad
        -i e^{ i\pi N z}D(z)^{-1}
        \left[
            \text{Ai}(\widetilde{f})
            \,\widetilde{a}(z,n)
            +
            \text{Ai}'(\widetilde{f})
            \,\widetilde{b}(z,n)
        \right]
    \Bigg\}
.
\label{H_1p}
\end{align}
The terms in curly brackets in (\ref{H_1p}) can be rewritten as
\begin{align}
   \cos(N\pi z)&D(z)
        \Bigg[
            \left(-2i\omega\text{Ai}(\omega\widetilde{f})
	    -i\text{Ai}(\widetilde{f})
            -i(D(z)^{-2}-1)\text{Ai}(\widetilde{f})
            \right)
            \,\widetilde{a}(z,n)
\nonumber
\\
&\quad\qquad +
            \left(
            -2i\omega^2
            \text{Ai}'(\omega\widetilde{f})
            -i\text{Ai}'(\widetilde{f})	
            -i(D(z)^{-2}-1)\text{Ai}'(\widetilde{f})
            \right)
            \,\widetilde{b}(z,n)
        \Bigg]
\nonumber
\\
&\qquad
        +\sin(N\pi z)D(z)^{-1}
        \Bigg[
            \text{Ai}(\widetilde{f})
            \,\widetilde{a}(z,n)
            +
            \text{Ai}'(\widetilde{f})
            \,\widetilde{b}(z,n)
        \Bigg].
\label{H_112}
\end{align}
%%%%%%%%%%%%%%%%%%%%%%%%%%%%%%%%%%%%%%%%%%%%%%%%%%%%%%%%%%%%%%
%%%%%%%%%%%%%%%%%%%%% Page 18 %%%%%%%%%%%%%%%%%%%%%%%%%%%%%%%%%
%%%%%%%%%%%%%%%%%%%%%%%%%%%%%%%%%%%%%%%%%%%%%%%%%%%%%%%%%%%%%%
Recall the well-known formula of the Airy functions \cite[(9.2.11)]{Handbook}
\begin{eqnarray}
    \text{Bi}(z)
=
    \pm i \left[ 2 e^{\mp \pi i/3} \text{Ai}(\omega^{\pm 1} z)-\text{Ai}(z) \right]
.
\label{Bi}
\end{eqnarray}
Also, note that
$D(z)\sim 1$
for $z\neq 0$
and
$\text{Ai}(\widetilde{f})$
and
$\text{Ai}'(\widetilde{f})$
are exponentially small as $n\rightarrow \infty$ when $z$ is close to
the origin. Hence,
we can always neglect the terms $(D(z)^{-2} -1) \text{Ai}(\widetilde{f})$  and  $(D(z)^{-2} -1) \text{Ai}'(\widetilde{f})$ in (\ref{H_112}).
This together with Lemma 4.1 and (\ref{Bi}) gives
\begin{align}
    H_{11}(z)
&=
    (-1)^n\sqrt{\pi} e^{nl/2}\prod_{j=0}^{N-1}( z-x_{N,j} )^{-1}
\nonumber
\\
&\  \times
    \Bigg\{
        \left[
        \sin(N\pi z) \text{Ai}(\widetilde{f}(z))+\cos(N\pi z)D(z) \text{Bi}(\widetilde{f}(z))
        \right]
        \widetilde{A}(z,n)
\nonumber
\\
&\qquad\  +
        \left[
        \sin(N\pi z) \text{Ai}'(\widetilde{f}(z))+\cos(N\pi z)D(z) \text{Bi}'(\widetilde{f}(z))
        \right]
        \widetilde{B}(z,n)
    \Bigg\}
.
\label{H_1}
\end{align}
%Here we have used the fact that

Similarly,
for $z\in$ III$\,\cap\,\Omega_- $
\begin{align}
    H_{11}(z)
&=
    (-1)^n\sqrt{\pi} e^{nl/2}\prod_{j=0}^{N-1}( z-x_{N,j} )^{-1}
\\
& \  \times
    \Bigg\{
        \cos(N\pi z)D(z)
        \left[
            2i\omega^2\text{Ai}(\omega^2\widetilde{f})
            \,\widetilde{a}(z,n)
            +2i\omega
            \text{Ai}'(\omega^2\widetilde{f})
            \,\widetilde{b}(z,n)
        \right]
\nonumber
\\
& \qquad
        +i e^{ -i\pi N z}D(z)^{-1}
        \left[
            \text{Ai}(\widetilde{f})
            \,\widetilde{a}(z,n)
            +
            \text{Ai}'(\widetilde{f})
            \,\widetilde{b}(z,n)
        \right]
    \Bigg\}
.
\end{align}
Again, by (\ref{Bi}), we get  exactly the same formula given in (\ref{H_1}).
Using Lemma 4.1, we can obtain the desired result (\ref{pi_left}).
Now, we show that the asymptotic formula of $H_{11}(z)$ in (\ref{H_1})
holds not only for $z$ in $\Omega_\pm$,
but also for $z$ in the whole I and III.
From the relation between
${R}(z)$ and $H(z)$ in (\ref{R_H}), we know that $H_{11}(z)$ =$R_{11}(z)$
for $z\in$ I $ \setminus\Omega_+$. In contrast to the expansion in (\ref{H_1p}), for $z\in$ I $ \setminus\Omega_+$ the term
$$
    (-1)^n\sqrt{\pi} e^{nl/2}
    \prod_{j=0}^{N-1}(z-x_{N,j})^{-1}
    \times
    \Big\{
    -i e^{ i\pi N z}D(z)^{-1}
        \left[
            \text{Ai}(\widetilde{f})
            \,\widetilde{a}(z,n)
            +
            \text{Ai}'(\widetilde{f})
            \,\widetilde{b}(z,n)
        \right]
    \Big\}
$$
does not appear,
since the quantity $e^{i\pi Nz}\Ai(\widetilde f)$ is exponentially
small as $n$ goes to infinity, in comparison with the other term in (\ref{H_1p}). This suggests that the region of validity of the expansion
given in (\ref{H_1}) can be extended to  $z \in\text{I}\cup\text{III}$.
As a consequence, (\ref{pi_left}) holds for $z \in\text{I}\cup\text{III}$.

In a similar manner,  we have
\begin{align*}
      H_{11}(z)
&=
    (-1)^N\sqrt{\pi} e^{nl/2}\prod_{j=0}^{N-1}( z-x_{N,j} )^{-1}
\\
& \  \times
    \Bigg\{
        \cos(N\pi z)D^*(z)
        \left[
            2i\omega^2\text{Ai}(\omega^2f^*)
            \,{a}^*(z,n)
            +2i\omega
            \text{Ai}'(\omega^2f^*)
            \,{b}^*(z,n)
        \right]
\\
& \qquad
        +i e^{ i\pi N z}D^*(z)^{-1}
        \left[
            \text{Ai}(f^*)
            \,{a}^*(z,n)
            +
            \text{Ai}'(f^*)
            \,{b}^*(z,n)
        \right]
    \Bigg\},
\end{align*}
%%%%%%%%%%%%%%%%%%%%%%%%%%%%%%%%%%%%%%%%%%%%%%%%%%%%%%%%%%%%%%
%%%%%%%%%%%%%%%%%%%%%%%% Page 19 %%%%%%%%%%%%%%%%%%%%%%%%%%%%%%%
%%%%%%%%%%%%%%%%%%%%%%%%%%%%%%%%%%%%%%%%%%%%%%%%%%%%%%%%%%%%%%%%
where
$$
    \,{a}^*(z,n)=\frac{(z-a)^{1/4} }{ (z-b)^{1/4}   }f^*(z)^{1/4}(S_{11}+iS_{12})
$$
and
$$
    \,{b}^*(z,n)=\frac{ (z-b)^{1/4} }{ (z-a)^{1/4}  }f^*(z)^{-1/4}(S_{11}-iS_{12})
$$
for $z\in$ II$\,\cap\, \Omega_+$.
Following the same argument as given above, one can obtain (\ref{pi_right})
for $z\in$ II and IV.
This completes the proof of Theorem 5.1.
\end{proof}

\section{Comparison with Earlier Results}
To compare our formulas in Theorem 5.1 with those given in \cite{PanWong},
we first derive from (\ref{pi_left}) two simple asymptotic  formulas for $\pi_{N,n}(Nz-1/2)$ when $z$ is real and less than $a$; cf. (3.2).
\begin{theorem}
Let $l$, $\pt(z)$, $D(z)$ and $\wt D(z)$ be defined  as in (\ref{phi}), (\ref{phitilde}), (\ref{D}) and (\ref{Dtilde}), respectively. We have
\begin{align}
	\pi_{N,n}(Nz-1/2)
 =
	(-N)^n e^{nl/2}
	&\bigg\{
	e^{ -n \pt(z) }D(z)\cos(N\pi z)
	\frac{  (z-a)^{1/2}+(z-b)^{1/2}  }{ (z-a)^{1/4}(z-b)^{1/4}   }
	\left[
	1+O(1/n)
	\right]
 \notag
 \\
 &\
	+O(e^{ n\text{Re}\,\pt(z) })
	\bigg\}
\label{apprPL}
\end{align}
for $0<z\leq a-\delta<a$, and
\begin{align}
	\pi_{N,n}(Nz-1/2)
 =
 	N^n e^{nl/2}
 	\bigg\{
 	 e^{ -n\phi(z) }\wt D(z)
	\frac{  (z-a)^{1/2}+(z-b)^{1/2}  }{ 2(z-a)^{1/4}(z-b)^{1/4}   }
	\left[
	1+O(1/n)
	\right]
	+O(e^{ n\text{Re}\,\pt(z) })
	\bigg\}
	\label{apprPS}	
\end{align}
for $z<0$.
\end{theorem}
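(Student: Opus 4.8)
The plan is to derive both formulas directly from the Airy-type representation (\ref{pi_left}) of Theorem 5.1, which is valid throughout regions I and III and hence, by continuity, along the real axis for $z<a$. The only analytic input needed is the large-argument behaviour of the Airy functions and their derivatives. Since $z<a$ is real, (\ref{phi_less_0}) gives $\widetilde\phi(z)<0$ on $(0,a)$ and, together with the mapping picture in Figure \ref{Figure_phi}c, $\mathrm{Re}\,\widetilde\phi(z)<0$ on $(-\infty,0)$; in either case $\widetilde f(z)=(-\tfrac32 n\widetilde\phi(z))^{2/3}$ is large as $n\to\infty$, with argument confined to a sector in which (\ref{Ai_asy}) and the companion expansion for $\mathrm{Bi}$ apply.

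First I would treat the range $0<z\le a-\delta$. Using $\tfrac23\widetilde f^{3/2}=-n\widetilde\phi$, the standard asymptotics give $\mathrm{Bi}(\widetilde f)\sim \pi^{-1/2}\widetilde f^{-1/4}e^{-n\widetilde\phi}$ and $\mathrm{Bi}'(\widetilde f)\sim \pi^{-1/2}\widetilde f^{1/4}e^{-n\widetilde\phi}$, while $\mathrm{Ai}(\widetilde f)$ and $\mathrm{Ai}'(\widetilde f)$ are $O(e^{n\mathrm{Re}\,\widetilde\phi})$ and hence recessive. Substituting these into (\ref{pi_left}), the powers $\widetilde f^{\pm1/4}$ carried by $\mathrm{Bi},\mathrm{Bi}'$ cancel precisely against the $\widetilde f^{\mp1/4}$ built into $\widetilde A(z,n)$ and $\widetilde B(z,n)$, and the two surviving coefficients combine into $\frac{(z-b)^{1/4}}{(z-a)^{1/4}}+\frac{(z-a)^{1/4}}{(z-b)^{1/4}}=\frac{(z-a)^{1/2}+(z-b)^{1/2}}{(z-a)^{1/4}(z-b)^{1/4}}$. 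The prefactor $\sqrt\pi$ in (\ref{pi_left}) cancels the $\pi^{-1/2}$, the $\sin(N\pi z)\,\mathrm{Ai}$-terms collapse into the error $O(e^{n\mathrm{Re}\,\widetilde\phi})$, and (\ref{apprPL}) results.

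To obtain (\ref{apprPS}) for $z<0$ I would keep the same dominant expression and merely repackage it through two algebraic identities. From (\ref{phitilde}) and the relation $n/c=N$ one has $e^{-n\widetilde\phi(z)}=(-1)^n e^{-n\phi(z)}e^{\pm N\pi i z}$ on $\mathbb{C}_\pm$, whence $(-N)^n e^{-n\widetilde\phi(z)}=N^n e^{-n\phi(z)}e^{\pm N\pi i z}$; and (\ref{D-Dtilde}) gives $\cos(N\pi z)e^{\pm N\pi i z}D(z)=\tfrac12\widetilde D(z)$. Combining the two converts $(-N)^n\cos(N\pi z)D(z)e^{-n\widetilde\phi(z)}$ into $\tfrac12 N^n\widetilde D(z)e^{-n\phi(z)}$, which is precisely the factor appearing in (\ref{apprPS}); the $2$ in its denominator is the $\tfrac12$ produced here. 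Since $|(-N)^n|=N^n$, the recessive $O(e^{n\mathrm{Re}\,\widetilde\phi})$ term is unaffected, and (\ref{apprPS}) follows.

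The main obstacle I anticipate is the bookkeeping of branches rather than any hard estimate. For $z<0$ the argument $\widetilde f(z)$ is no longer real, so I must check that $\arg\widetilde f(z)$ stays inside the sectors where (\ref{Ai_asy}) and the $\mathrm{Bi}$-asymptotics hold and, crucially, that the dominant/recessive roles of $\mathrm{Bi}$ and $\mathrm{Ai}$ persist; this is exactly where $\mathrm{Re}\,\widetilde\phi<0$ (from (\ref{phi_less_0}) and Figure \ref{Figure_phi}c) is used. I would also verify that the upper and lower choices of the $\pm$ signs in (\ref{phitilde}) and (\ref{D-Dtilde}) yield the same real right-hand side—a consistency forced by the reality of $\pi_{N,n}$ for real $z$—so that $\phi(z)$ and $\widetilde D(z)$ may be written without an explicit boundary label in (\ref{apprPS}).
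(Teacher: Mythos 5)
Your proposal is correct and follows essentially the same route as the paper: both apply the large-argument Airy asymptotics to (\ref{pi_left}), discard the recessive $\mathrm{Ai}$-terms using $\widetilde\phi<0$ on $(0,a)$ and $\mathrm{Re}\,\widetilde\phi_\pm<0$ on $(-\infty,0)$, and for $z<0$ convert the dominant term via (\ref{phitilde}) and (\ref{D-Dtilde}); your two identities combine into exactly the paper's relation (\ref{D11}), including the consistency of the upper and lower boundary values that the paper checks by writing out $\pi^{\pm}_{N,n}$ explicitly.
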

\begin{proof}
From the well-known asymptotic expansions of the Airy function
\cite[(9.7.5)-(9.7.8)]{Handbook},
we have
\begin{align}
	&\Ai(\ft(z))\sim \frac{\ft(z)^{-1/4}}{2\sqrt{\pi}}
	 e^{ n\pt(z)  }
	\sum_{k=0}^{\infty}
	\frac{  (-1)^k u_k  }{  (-n\pt(z))^{k}  }
	,
	&&\Ai'(\ft(z))\sim -\frac{\ft(z)^{1/4}}{2\sqrt{\pi}}
	 e^{ n\pt(z)  }
	\sum_{k=0}^{\infty}
	\frac{  (-1)^k v_k  }{  (-n\pt(z))^{k}  }
	,
\label{Aift}
\end{align}
and
\begin{align}
	&\Bi(\ft(z))\sim \frac{\ft(z)^{-1/4}}{\sqrt{\pi}}
	 e^{ -n\pt(z)  }
	\sum_{k=0}^{\infty}
	\frac{  u_k  }{  (-n\pt(z))^{k}  },
	&&	
	\Bi'(\ft(z))\sim \frac{\ft(z)^{1/4}}{\sqrt{\pi}}
	 e^{ -n\pt(z)  }
	\sum_{k=0}^{\infty}
	\frac{  v_k  }{  (-n\pt(z))^{k}  }
,
\label{Bift}
\end{align}
%as $n\rightarrow \infty$.
%Corresponding results can be given for $\Ai'(\ft(z))$ and $\Bi'(\ft(z))$.
where $\ft(z)$ is defined in (\ref{f}).
Note that $\pt(z)$ is analytic for real $z\in(0,a)$; see the paragraph containing (\ref{phitilde}).
Applying (\ref{Aift})-(\ref{Bift}) to (\ref{pi_left}) gives
\begin{align}
	\pi_{N,n}(Nz-1/2)
=
	(-N)^n  e^{nl/2}
	&\left\{
	e^{n\pt(z)}\sin(N\pi z)
	\frac{  (z-b)^{1/2}-(z-a)^{1/2} }{ 2(z-a)^{1/4}(z-b)^{1/4}   }\left[
	1+O(1/n)
	\right]
	\right.
	\notag
	\\
&\quad  +
	\left.
	e^{-n\pt(z)}D(z)\cos(N\pi z)
	\frac{  (z-a)^{1/2}+(z-b)^{1/2}  }{ (z-a)^{1/4}(z-b)^{1/4}   }
	\left[
	1+O(1/n)
	\right]
	\right\}
\label{apprLd}
 .
\end{align}
%Observe from Figure 1 that Re$\, \pt(z)<0$ in this case. Since
Since  $\pt(z)<0$ in this case on account of (\ref{phi_less_0}),
$e^{n\pt(z)}$ is exponentially small as $n$ goes to infinity.
Thus, (\ref{apprPL}) follows from (\ref{apprLd}).

Note that  $\pt(z)$ has a jump across the negative real axis. For real $z<0$,
we obtain
from (\ref{Aift})-(\ref{Bift}) and (\ref{pi_left})
\begin{align}
	\pi_{N,n}^+(Nz-1/2)
=
	(-N)^n  e^{nl/2}
	&\left\{
	e^{n\pt_+(z)}\sin(N\pi z)
	\frac{  (z-b)^{1/2}-(z-a)^{1/2} }{ 2(z-a)^{1/4}(z-b)^{1/4}   }\left[
	1+O(1/n)
	\right]
	\right.
	\notag
	\\
&\ \
 +
	\left.
	e^{-n\pt_+(z)}D_+(z)\cos(N\pi z)
	\frac{  (z-a)^{1/2}+(z-b)^{1/2}  }{ (z-a)^{1/4}(z-b)^{1/4}   }
	\left[
	1+O(1/n)
	\right]
	\right\}
\label{apprD1}
\end{align}
as $n\rightarrow \infty$,
where $\pi_{N,n}^+(Nz-1/2)$ denotes the limiting value of $\pi_{N,n}(Nz-1/2)$ as $z$ approaches the real line from above.
Observe from Figure 2c that Re$\, \pt_+(z)<0$. Thus, it follows from (\ref{apprD1}) that
\begin{align}
	\pi_{N,n}^+(Nz-1/2)
 =
	(-N)^ne^{nl/2}
	&\bigg\{
	e^{ - n\pt_+(z) } D_+(z) \cos(N\pi z)
	\frac{  (z-a)^{1/2}+(z-b)^{1/2}  }{ (z-a)^{1/4}(z-b)^{1/4}   }
	\left[
	1+O(1/n)
	\right]
\notag
\\
&\ \
 	+O(e^{ n\text{Re}\,\pt(z) })
 	\bigg\}
 .
\label{apprD11}
\end{align}
Similarly,  one can see that
 $\pi_{N,n}^-(Nz-1/2)$ is given by
\begin{align}
	(-N)^ne^{nl/2}
	\bigg\{
	 e^{ - n\pt_-(z) } D_-(z)\cos(N\pi z)
	\frac{  (z-a)^{1/2}+(z-b)^{1/2}  }{ (z-a)^{1/4}(z-b)^{1/4}   }
	\left[
	1+O(1/n)
	\right]
 	+O(e^{ n\text{Re}\,\pt(z) })
 	\bigg\}
\label{apprD12}
 .
\end{align}
From the definition of $\phi$ in (\ref{phi}), we note that
$e^{n\phi(z)}$ can be analytically extended to $(-\infty,0)$.
Thus, from (\ref{phitilde}) and (\ref{D-Dtilde}) it follows that
\begin{align}
	e^{- n\pt_\pm(z)  }D_\pm(z)\cos(N\pi z)
	=\frac{  (-1)^n  }{  2  }e^{-n\phi(z) }\wt D(z).
\label{D11}
\end{align}
In view of (\ref{D11}), the two asymptotic formulas (\ref{apprD11}) and (\ref{apprD12}) are exactly the same. Hence,
\begin{align}
  \pi_{N,n}(Nz-1/2)
 &=
 	N^n e^{nl/2}
 	\bigg[
 	 e^{ -n\phi(z) }\wt D(z)
	\frac{  (z-a)^{1/2}+(z-b)^{1/2}  }{ 2(z-a)^{1/4}(z-b)^{1/4}   }
	\left[
	1+O(1/n)
	\right]
	+O(e^{ n\text{Re}\,\pt(z) })
	\bigg]
,
\end{align}
%On account of () and (), one can
%readily sees that $e^{n\phi(z)}$ can be extended to $\mathbb{C}\setminus[a,b]$.
 for real $z<0$.  This gives  (\ref{apprPS}).
\end{proof}

We will now show that our asymptotic formulas for the discrete Chebyshev polynomials are the same as those given by Pan and Wong \cite{PanWong}. First, we introduce the notation
\begin{align}
	x:=Nz-1/2
.
\label{fixedx}
\end{align}
Two different asymptotic approximations for $t_n(x,N+1)$ are given in \cite[(8.13) and (8.6)]{PanWong}, one for $x$ negative
and the other for $x$ positive. Changing $N+1$ to $N$, they read
\begin{align}
	t_n(x,N) &=
	\frac{  (-1)^{n+1}\Gamma(n+N+1)(N-1)^x n^{-2x-2} \Gamma(x+1)  }
	{ \Gamma(N)\pi   }
	\notag
	\\
	&\qquad\times
	\left\{
	\sin(\pi x)
	\left[
	1+ O\(\frac1N\)
	\right]+O(e^{N\eta})
	\right\}
\label{PanL}
\end{align}
for fixed $x>0$ and
\begin{align}
	t_n(x,N) &=
	\frac{  (-1)^{n}\Gamma(n+N+1)(N-1)^x n^{-2x-2}  }
	{ \Gamma(N)\Gamma(-x)   }
	\left[
	1+ O\(\frac1N\)
	\right]
\label{PanS}
\end{align}
for fixed $x<0$.

%In Theorem 5.1, we have obtained the asymptotic approximation for the discrete Chebyshev polynomials on the complex plane in terms of the Airy function and its derivative.

We next derive from (\ref{apprPL}) and (\ref{apprPS})  asymptotic formulas for $t_{n}(x,N)$ when $x$ is fixed
(i.e., $z=O(1/N)$). For $x>0$, i.e., $z>0$,  substituting (\ref{fixedx}) in (\ref{D}) gives
\begin{align}
	D(z)\cos(N\pi z)=\frac{ e^{Nz} \Gamma(Nz+1/2) }{ \sqrt{2\pi}(Nz)^{Nz} }\cos(N\pi z)=-\frac{ e^{x+1/2} \Gamma(x+1) }{ \sqrt{2\pi}(x+1/2)^{x+1/2} }
\sin(\pi x)	
.
\label{Dx}
\end{align}
Moreover, it is readily verified that
\begin{align}
	 \frac{  (2n)!  }{ n!^2   }
 \sim
	\frac{  1  }{ \sqrt{\pi}    }2^{2n}n^{-1/2}
\label{LGamma}
\end{align}
as $n\rightarrow \infty$ and
\begin{align}
	\frac{  (z-a)^{1/2}+(z-b)^{1/2}  }{ (z-a)^{1/4}(z-b)^{1/4}   }
\sim
   	\frac{a^{1/2}+b^{1//2}  }{ (ab)^{1/4} }
 =
	\frac{(a+b+2a^{1/2}b^{1/2})^{1/2}  }{ (ab)^{1/4} }
 =
	\sqrt{2}(1+c)^{1/2}c^{-1/2}
\label{cosx}
\end{align}
as $z \rightarrow 0$. Here, we have made use of  the fact that
$a+b=1$ and $ab=c^2/4$; see (3.2).

Note that $z\rightarrow 0$ as $n\rightarrow \infty$; a combination of  (2.1),  (\ref{Dx})-(\ref{cosx})  and (\ref{apprPL}) yields
\begin{align}
	t_n(x,N)
 \sim
 	(-1)^{n+1}
 	 N^n  2^{2n}n^{-1/2}
 	\frac{ e^{x+1/2} \Gamma(x+1) }{ \pi (x+1/2)^{x+1/2} }
 	(1+c)^{1/2}c^{-1/2}\sin(\pi x)
 	e^{n(l/2-\pt(z)) }
\label{apprTLD}
\end{align}
as $n\rightarrow \infty$. We now derive an explicit formula for $l/2-\pt(z)$; see (\ref{Pt}) below.
From  (\ref{phi}) and (\ref{phitilde}),  one has
\begin{align}
	l/2-\pt(z)
	=l/2-\phi_+(z)-\pi i(1-\frac1c z)
	=
	\text{Re}\,g_+(z)
\label{ptexplicit}
\end{align}
for $0<z\leq a-\delta$.
This fact evokes us to calculate $g(z)$ first. From (\ref{density function}) and (\ref{g}), it is easily seen that
\begin{align}
	g(z)=\frac1c
	\int_0^a \log(z-s)ds +\frac1c \int_b^1 \log(z-s)ds
	+\int_a^b\log(z-s) \mu(s)ds
.
\end{align}
By integration by parts twice, we obtain
\begin{align}
	\int_a^b  \log(z-s) \mu(s) ds
	&=\mu(s)\log(z-s) s \Big |_{a}^{b}
	-z\mu(s)\log(z-s)  \Big |_{a}^{b}
	-\int_a^b \mu(s) ds
	\notag
        		\\
        		&\qquad
	+z\int_a^b  \mu(s) ' \log(z-s) ds
        		-
       			\int_a^b s  \log(z-s) \mu(s) ' ds
 .
 \label{temp1}
\end{align}
%Recall the definition of $\mu(x)$, it is obvious that
%\begin{align}
%	\int_a^b \mu(s) ds=1-\frac{2a}{c}
%.
%\end{align}
By  the definition of $\mu(x)$, we have
\begin{align}
	\mu(x)'
	=
	-\frac {1}{2\pi} \frac{1-2x}
	{ (x-x^2) \sqrt{ (x-a)(b-x) } }
 .
\end{align}
Here, we have again used the relations $a+b=1$ and $ab=c^2/4$. Moreover,  one can show that
\begin{align}
	\int_a^b  \log(z-s) \mu(s) ds
	&=
	\bigg[
	\frac1c (z-a)\log(z-a)-
 	\frac1c (z-b)\log(z-b)
	\bigg]
	-(1-\frac{2a}{c})
		\notag
        		\\
        		&\quad	
	-\frac{  z  }{ 2\pi   } \int_a^b   \frac{\log(z-s)}
		{ s \sqrt{ (s-a)(b-s) } }ds        	
        		+\frac{  1  }{ 2\pi   } (z-1) \int_a^b   \frac{\log(z-s)}
		{ (1-s) \sqrt{ (s-a)(b-s) } }ds
        		\notag
        		\\
        		&\quad	
        		+
        		\frac {1}{\pi} \int_a^b   \frac{\log(z-s)}
		{  \sqrt{ (s-a)(b-s) } }ds
.
\end{align}
Let the integrals on the right-hand side of the equality be denoted by $I_1$,
$I_2$ and $I_3$, respectively. To evaluate $I_1$,  we note that from \cite{WangWong_Stieltjes-Wigert} we have
\begin{align}
	\int_a^b \frac{  \log s   }{  (s-z) \sqrt{ (s-a)(b-s) } }ds
	=\frac{  2\pi  }{  (z-a)^{1/2}(z-b)^{1/2}  }
	\log \frac{  z+\sqrt{ ab }+(z-a)^{1/2}(z-b)^{1/2}  }
	{ (\sqrt a +\sqrt b)z   }
 .
\label{wangintegral}
\end{align}
Making the change of variable $s=z-x$, (\ref{wangintegral}) gives
\begin{align}
		I_1
	&=
		\int_{z-a}^{z-b}   \frac{\log x }
		{ (z-x)  \sqrt{ (z-x-a)(b-z+x) } }d(z-x)
	\hh
	-\frac{4\pi}{c} \log
		\frac{ z+\sqrt{ (z-a)(z-b) }+c/2  }
		{ (\sqrt{z-a}+\sqrt{z-b})z }.
\end{align}
Similarly, we can evaluate $I_2$ and $I_3$. Thus, a straightforward calculation shows that
\begin{align}
	g(z)
&=
	-1-2\log 2
           +\frac1c (z-1)\log(z-1)-\frac1c z\log z
                         +(2-\frac2c)\log\left[ (z-a)^{1/2}+ ({z-b})^{1/2} \right]
            \notag
            \\
            &\ +
                \frac2c z
                   \log\left[ z+  (z-a)^{1/2}(z-b)^{1/2}+c/2  \right]+\frac2c (1-z)
                \log
        		\left[ z-1+ (z-a)^{1/2}(z-b)^{1/2}-c/2  \right]
.
\label{gexplicit}
\end{align}
By (\ref{ptexplicit}) and (\ref{gexplicit}), $l/2-\pt(z)$  can be explicitly given by
\begin{align}
	&-1-2\log 2
           +\frac1c (z-1)\log(1-z)-\frac1c z\log z
                         +(2-\frac2c)\log\left[ (a-z)^{1/2}+ ({b-z})^{1/2} \right]
            \notag
            \\
            &\quad+
                \frac2c z
                   \log\left[ z-  (a-z)^{1/2}(b-z)^{1/2}+c/2  \right]+\frac2c (1-z)
                \log
        		\left[ 1-z+ (a-z)^{1/2}(b-z)^{1/2}+c/2  \right]
\label{Pt}
.
\end{align}
Note that $n=cN$ and $z\rightarrow 0$ as $n\rightarrow \infty$.
From (\ref{Pt}), we have
\begin{align}
	l/2-\pt(z)=
	\left[
		-1-2\log 2 +(1+\frac1c)\log(1+c)
		\right]
		+z
		\left[
		\frac1c \log(z) -\frac2c \log c -\frac1c
		\right]+O(z^2)
\end{align}
and so
\begin{align}
	e^{n(l/2-\pt(z))}
	\sim
	e^{-n}2^{-2n} (1+c)^{n+N}(x+1/2)^{x+1/2}e^{-x-1/2}N^{x+1/2}n^{-2x-1}
.
\label{apprept}
\end{align}
%Here we also have made use of the fact $a+b=1$ and  $ab=c^2/4$.
Substituting (\ref{apprept}) in (\ref{apprTLD}) yields
\begin{align}
	t_n(x,N)
 \sim
	\frac{  (-1)^{n+1}\Gamma(x+1)n^{-2x-2}  }{  \pi }	\sin(\pi x)
	N^{n+x+1} (1+c)^{n+N+1/2} e^{-n}
\label{apprTL}
\end{align}
which will agree with (\ref{PanL}) to leading order if we can show that
\begin{align}
	\frac{  \Gamma(n+N+1)(N-1)^x  }{ \Gamma(N)   }
	\sim N^{n+x+1}(1+c)^{n+N+1/2}e^{-n}.
 \label{PGamma}	
\end{align}
The last result is a direct consequence of Stirling's formula.
%\begin{align}
%	\frac{  \Gamma(n+N+1)(N-1)^x  }{ \Gamma(N)   }
% &\sim
%	\frac{  (n+N+1)^{n+N+1/2} e^{ -n-N-1 }  }{  N^{N-1/2}  e^{-N}   }
%	N^{x}
% \notag
% \\
% &\sim
%	(1+c+ 1/N)^{n+N+1/2} N^{n+x+1} e^{-n-1}
% \notag
% \\
% &\sim
% 	(1+c)^{n+N+1/2}N^{n+x+1}e^{-n}
% .
% \label{PGamma}	
%\end{align}
%Here we have used $(1+c+ 1/N)^{n+N+1/2} \sim(1+c)^{n+N+1/2}e$.
%Thus, (\ref{PanL}) and (\ref{apprTL}) match to leading order.

Next, we consider the case when $x$ is negative, but for a moment let's restrict
$-\infty<x<-\frac12$ (i.e., $z<0$).
Inserting (\ref{fixedx}) in (\ref{Dtilde}), we have
\begin{align}
	\wt D(z)= \frac{  \sqrt{2\pi}e^{x+1/2}  }{ (-x-1/2)^{x+1/2}\Gamma(-x)   }
.
\label{Dtildex}
\end{align}
%On account of () and (), one can
%readily sees that $e^{ng(z)}$ can be extended to $\mathbb{C}\setminus[a,b]$.
A combination of (\ref{apprPS}),  (\ref{LGamma})-(\ref{cosx}) and (\ref{Dtildex}) gives
\begin{align}
	t_n(x,N)
 \sim
 	 N^n  2^{2n}n^{-1/2}
 	\frac{ e^{x+1/2} }{  (-x-1/2)^{x+1/2} \Gamma(-x) }
 	(1+c)^{1/2}c^{-1/2}
 	e^{n(l/2-\phi(z)) }
.
\label{apprTSD}
\end{align}
Since $e^{n\phi(z)}$ is analytic in the interval $(-\infty,0)$, we obtain from
(\ref{phi}) that $e^{n(l/2-\phi(z) )}=e^{n(l/2-\phi_+(z)  )}= e^{ng_+(z)}$.
On account of  (\ref{gexplicit}), it can be shown that
\begin{align}
	g_+(z)=\left[\pi i
		-1-2\log 2 +(1+\frac1c)\log(1+c)
		\right]
		+z
		\left[
		\frac1c \log(-z) -\frac2c \log c -\frac1c
		\right]+O(z^2)
\label{Pp}
\end{align}
as $z\rightarrow 0$.
Thus, letting $n\rightarrow \infty$, we have from (\ref{Pp})
\begin{align}
	e^{n(l/2-\phi(z))}
	\sim
	(-1)^n 2^{-2n}(1+c)^{n+N}e^{-n}
	 c^{-2x-1}N^{-x-1/2} (-x-1/2)^{x+1/2} e^{-x-1/2}
.
\label{t22}
\end{align}
From (\ref{apprTSD}) and  (\ref{t22}),
we obtain
\begin{align}
	t_n(x,N)\sim
	(-N)^n \frac{   n^{-2x-2}  }{   \Gamma(-x) }	
	(1+c)^{n+N+1/2}N^{x+1}e^{-n}
.
 \label{temp1}
\end{align}
In the case
$-\frac12<x<0$, formula (\ref{temp1}) follows directly from (\ref{apprTL}) on an appeal to the
reflection formula $\Gamma(1+x)\Gamma(-x)=-\pi/\sin(\pi x)$.
On the other hand, coupling  (\ref{PanS}) and (\ref{PGamma})  gives
\begin{align}
	t_n(x,N)\sim
	(-N)^n \frac{   n^{-2x-2}  }{   \Gamma(-x) }	
	(1+c)^{n+N+1/2}N^{x+1}e^{-n}
\end{align}
as $n\rightarrow \infty$.
%This establishes the equivalence between (\ref{PanS}) and (\ref{apprS}).
Therefore, our results (\ref{apprTL}) and (\ref{temp1}) agree with those
of Pan and Wong \cite{PanWong} stated in (\ref{PanL}) and (\ref{PanS}).

%By a straightforward calculation,
%one can obtain the explicit formula for $l-\pt(z)$ which is given by
%	\begin{align}
%            &
%           -1-2\log(2)
%           +\frac1c\log(1-z)z-\frac1c \log(z)z-\frac1c \log(1-z)
%                         +(2-\frac2c)\log\( (a-z)^{1/2}+ ({b-z})^{1/2} \)
%            \notag
%            \\
%            &+
%                \frac2c z
%                   \log\( z-  (a-z)^{1/2}(b-z)^{1/2}+c/2  \)+\frac2c (1-z)
%                \log
%        		\( 1-z+ (a-z)^{1/2}(b-z)^{1/2}+c/2  \)
%        .
%        \label{Pt}
%        \end{align}
%
%Therefore,
%\begin{align}
%	t_n(x,N)
% \sim
% 	\frac{  \Gamma(2n+1)  }{  \Gamma(n+1)^2  }
% 	N^n \wt D(z) e^{ ng(z) }
%	\frac{  (z-a)^{1/2}+(z-b)^{1/2}  }{ 2(z-a)^{1/4}(z-b)^{1/4}   }
%\label{apprS}
%\end{align}
%for $z<0$.
%
%For $0<z<\delta<a$, we have from (\ref{che}) and (\ref{apprL})
%\begin{align}
%	t_{N,n}(x)
%	\sim
%	(-N)^n
%	\frac{  \Gamma(2n+1)  }{  \Gamma(n+1)^2  }
%	 D(z) e^{ n(l/2- \pt(z)) }
%	\frac{  (z-a)^{1/2}+(z-b)^{1/2}  }{ (z-a)^{1/4}(z-b)^{1/4}   }
%	\cos(N\pi z)
%.
%\label{apprL}
%\end{align}


\begin{thebibliography}{99}

%\bibitem{Abramowitz} M. Abramowitz, and I. A. Stegun, \emph{Handbook of Mathematical Functions, with Formulas,
%Graphs, and Mathematical Tables}, Dover Publications, New York, 1970.


\bibitem{BaikBook} J. Baik, T. Kriecherbauer, K. T.-R. McLaughlin and P. D. Miller, \textit{Discrete Orthogonal
Polynomials:  Asymptotics and Applications},
Annals of Mathematics Studies, Vol. 164,
Princeton University Press, Princeton, NJ, 2007.

\bibitem{BealsWong} R. Beals and R. Wong, \emph{Special Functions: A Graduate Text}, Cambridge University Press,
Cambridge, 2010.

\bibitem{DaiWong}D. Dai and R. Wong, \emph{Global asymptotics of Krawtchouk polynomials - a Riemann-Hilbert
approach}, Chin. Ann. Math. Ser. B \textbf{28} (2007), 1-34.


\bibitem{Deift} P. Deift and X. Zhou, \emph{A steepest descent method for oscillatory Riemann-Hilbert problems,
Asymptotics for the MKdV equation}, Ann. Math. \textbf{137} (1993), 295-368.


\bibitem{FokasIts} A. S. Fokas, A. R. Its and A. V. Kitaev, \emph{The isomonodromy approach to matrix models in 2D
quantum gravity}, Comm. Math. Phys. \textbf{147} (1992), 395-430.

\bibitem{Gautschi} W. Gautschi, \emph{Orthogonal Polynomials: Computation and Approximation}, Oxford University Press, Oxford, 2004.

\bibitem{Hildebrand} F. B. Hildebrand, \emph{Introduction to Numerical Analysis}, 2nd ed., McGraw-Hill, New York, 1974.


\bibitem{KarlinMcgregor} S. Karlin and J. L.  McGregor, \emph{The Hahn polynomials, formulas and
an application},
Scripta Math. \textbf{26} (1961), 33-46.

\bibitem{Arno} A. B. J. Kuijlaars and W. Van Assche, \emph{The asymptotic zero distribution of orthogonal polynomials with varying recurrence coefficients},
J. Approx. Theory \textbf{99} (1999), 167-197.



\bibitem{Handbook} F. W. J. Olver, D. W. Lozier, R. F. Boisvert and C. W. Clark, \emph{NIST Handbook of Mathematical Functions}, Cambridge University Press, Cambridge, 2010.

\bibitem{OuWong} Chunhua Ou and R. Wong, \emph{The Riemann-Hilbert approach to global asymptotics of discrete orthogonal polynomials with infinite nodes}, Anal. Appl. \textbf{8} (2010), 247-286.

\bibitem{PanWong} J. H. Pan and R. Wong, \emph{Uniform asymptotic expansions for the discrete
Chebyshev polynomials}, Stud. Appl. Math.  \textbf{128} (2012), 337-384.


\bibitem{QiuWong}  W.-Y. Qiu and  R. Wong, \emph{Asymptotic expansions for Riemann-Hilbert problems}, Anal. Appl. \textbf{6} (2008),
269-298.

%\bibitem{Rak} Rakhmanov, \emph{Equilibrium measure and the distribution of zeros of the extremal polynomials of a discrete variable}, Sbornik: Mathematics. \textbf{187:8} (1996), 1213-1228.



\bibitem{Szego} G. Szeg\"o,\emph{ Orthogonal Polynomials}, 4th ed., AMS Colloquium Publications,
Vol. 23, Amer. Math. Soc., Providence R.I., 1975.

\bibitem{X.S.Wang} X. S. Wang and R. Wong, \emph{Global asymptotics of the Meixner polynomials},
Asymptotic Analysis \textbf{75} (2011), 211-231.

\bibitem{WangWong_Stieltjes-Wigert} Z. Wang and  R. Wong, \emph{Uniform asymptotics of the Stieltjes-Wigert polynomials via the
Riemann-Hilbert approach}, J. Math. Pures Appl. \textbf{85} (2006),
698-718.

\bibitem{WongBook} R. Wong, \emph{Asymptotic Approximations of Integrals}, Academic Press, Boston, MA, 1989.
Reprinted by SIAM, Philadelphia, PA, 2001.


\end{thebibliography}
\end{document}